\newcommand\blfootnote[1]{%
  \begingroup
  \renewcommand\thefootnote{}\footnote{#1}%
  \addtocounter{footnote}{-1}%
  \endgroup
}
\newcommand{\bc}{\begin{center}}
\newcommand{\ec}{\end{center}}
\newcommand{\be}{\begin{enumerate}}
\newcommand{\ee}{\end{enumerate}}
\newcommand{\beq}{\begin{equation}}
\newcommand{\eeq}{\end{equation}}
\newcommand{\bi}{\begin{itemize}}
\newcommand{\ei}{\end{itemize}}
\newcommand{\bd}{\begin{description}}
\newcommand{\ed}{\end{description}}
\newcommand{\ba}{\begin{array}}
\newcommand{\bea}{\begin{eqnarray*}}
\newcommand{\eea}{\end{eqnarray*}}
\newcommand{\ea}{\end{array}}
\newcommand{\bt}{\begin{tabular}}
\newcommand{\et}{\end{tabular}}
\newcommand{\bmi}{\begin{minipage}}
\newcommand{\emi}{\end{minipage}}
\newcommand{\lb}{\linebreak}
\renewcommand{\thefootnote}{\fnsymbol{footnote}}
\newtheorem{thm}{Theorem}[section]
\newtheorem{defn}[thm]{Definition}
\newtheorem{pro}[thm]{Proposition}
\newtheorem{algo}[thm]{Algorithm}
\newtheorem{rem}[thm]{Remark}
\newtheorem{exa}[thm]{Example}
\newtheorem{cor}[thm]{Corollary}
\newtheorem{lem}[thm]{Lemma}
\newcommand{\Vector}[2]{\left(\begin{matrix} #1 \\ #2 \end{matrix} \right)}
\begin{document}

\bc {\bf\large The subspace structure of finite dimensional Beidleman near-vector spaces}\\[3mm]
{\sc  P Djagba \&  K-T Howell}

\it\small
Department of Mathematical Sciences,
Stellenbosch University,
Stellenbosch, 7600,\lb
South Africa\\
\rm e-mail:prudence@aims.ac.za, kthowell@sun.ac.za
\ec
 
\normalsize

\quotation{\small {\bf Abstract:}
 The subspace structure of Beidleman near-vector spaces is investigated. We characterise  finite dimensional Beidleman near-vector spaces and we classify the $R$-subgroups of finite dimensional Beidleman near-vector spaces. We provide an algorithm to compute the smallest $R$-subgroup containing a given set of vectors. Finally, we classify the subspaces of finite dimensional Beidleman near-vector spaces.  }

\small
{\it Keywords:} Nearfields, Near-vector spaces, Dickson nearfields, $R$-subgroups.\\
\normalsize

\section{Introduction}
The first notion of near-vector spaces was introduced by Beidleman in $1966$ \cite{beidleman1966near}. Subsequently, several researchers like Whaling, Andr\'e, and Karzel introduced a similar notion in different ways. Andr\'e near-vector spaces have been studied in many papers (for example \cite{andre1974lineare,howell2007contributions, howell2015subspaces}).  In this paper, we add to the theory  of near-vector spaces originally defined by  Beidleman. As in \cite{howell2015subspaces} for  Andr\'e near-vector spaces, we investigate the subspace structure of Beidleman near-vector spaces and highlight differences and similarities between these two types of near-vector spaces.

In section $2$ we give some preliminary material needed for this paper. In section $3$ we characterize finite dimensional Beidleman near-vector spaces. In section $4$ we investigate some properties of the substructures of Beidleman near-vector spaces. In section $ 5$ we classify the $R$-subgroups of the near-vector space $R^n$  containing a given set of vectors. We provide an algorithm and  Sage program for computations. Finally in section $6$ we classify all the subspaces of $R^n$.
\section{Preliminary material }

\subsection{Basic definitions and results}

\begin{defn}(\cite{meldrum1985near})
The triple $(R,+,\cdot)$ is a (left) nearring  if $(R,+)$ is a group,
 $(R,\cdot)$ is a semigroup, and  $a(b+c)= ab+ac$ for all $a,b,c \in R.$
\end{defn}

A nearfield is an algebraic structure similar to a skew-field, sometimes called a division ring, except that it has only one of the two distributive laws.  
\begin{defn} (\cite{pilz2011near})
  Let $R$ be nearring. If $ \big ( R^*=R \setminus \{0 \}, \cdot \big )$ is a group then $(R,+, \cdot)$ is called nearfield.
\label{th:t8}
 \end{defn}
 We will make use of left nearfields and right nearring modules in this paper.
 Dickson, Zassenhauss, Neumann, Karzel and Zemmer have shown by different methods that the additive group of a nearfield is abelian.
 \begin{thm}(\cite{pilz2011near})
 The additive group of nearfield is abelian.
 \label{tt}
 \end{thm}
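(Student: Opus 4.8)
This statement is classical, so I will describe the standard route rather than grind out a full proof. Write $F$ for the nearfield and $F^{*}=F\setminus\{0\}$. From the single (left) distributive law one has $x\cdot 0=0$ for all $x$, and each left multiplication $L_{a}\colon x\mapsto ax$ is an endomorphism of $(F,+)$; for $a\in F^{*}$ it is an automorphism with inverse $L_{a^{-1}}$, and $L_{ab}=L_{a}L_{b}$, $L_{1}=\mathrm{id}$. Since $(F^{*},\cdot)$ is a group, $F$ has no zero divisors, so $L_{a}$ maps $F^{*}$ bijectively onto itself, and for a fixed $k\in F^{*}$ the map $a\mapsto ak$ is a bijection $F^{*}\to F^{*}$. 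The first step is to record the resulting rigidity: the only subgroups of $(F,+)$ left invariant by every $L_{a}$ ($a\in F^{*}$) are $\{0\}$ and $F$. Indeed, if $N\neq\{0\}$ is such a subgroup and $0\neq k\in N$, then $\{ak:a\in F^{*}\}=F^{*}\subseteq N$, and since $0\in N$ we get $N=F$.

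The second step reduces the theorem to a single identity. Let $K=[(F,+),(F,+)]$ be the commutator subgroup. It is characteristic, so each automorphism $L_{a}$ maps $K$ onto $K$; by the rigidity just noted, either $K=\{0\}$, and then $(F,+)$ is abelian as claimed, or $K=F$. To rule out $K=F$ it suffices to prove
\[
(-1)\cdot x=-x\qquad\text{for all }x\in F .
\]
Granting this, the additive inversion map $\iota\colon x\mapsto -x$ equals $L_{-1}$, hence is an automorphism of $(F,+)$; but a group in which inversion is an endomorphism is abelian, because $-(x+y)$ always equals $-y-x$ while the endomorphism property gives $-(x+y)=-x-y$, forcing $x+y=y+x$. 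Thus $K=\{0\}$.

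It remains to establish the displayed identity, and this is where the real work lies. If $1+1=0$, then for every $x$ we have $x+x=x\cdot 1+x\cdot 1=x(1+1)=x\cdot 0=0$, so $(F,+)$ has exponent $2$, is abelian, and in particular $(-1)x=-x$. If $1+1\neq 0$ there is no shortcut, and I expect this to be the main obstacle: one must show that left and right multiplication by $-1$ coincide, using the finer interplay of $+$ and $\cdot$. Here I would follow the classical arguments (Dickson, Zassenhaus, Neumann, Karzel, Zemmer; see \cite{pilz2011near}). One natural route is to pass to the sharply $2$-transitive group $\mathrm{Aff}(F)=\{x\mapsto ax+c:a\in F^{*},\,c\in F\}$ acting on $F$, in which the translations form a normal regular subgroup isomorphic to $(F,+)$ and the stabiliser of $0$ is $(F^{*},\cdot)$ acting via the $L_{a}$; the compatibility forced on these two actions --- equivalently, manipulation of relations such as $a^{-1}-(a+b)^{-1}=a^{-1}b(a+b)^{-1}$, obtained by applying $L_{a^{-1}}$ to the relation $a(a+b)^{-1}+b(a+b)^{-1}=1$ --- yields $(-1)x=-x$ and completes the proof.
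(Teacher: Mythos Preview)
The paper does not prove this theorem at all; it merely records it with a citation to Pilz. So there is no proof in the paper to compare your proposal against --- your sketch already goes further than the paper does.

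As a sketch, your reduction is sound: each $L_{a}$ ($a\in F^{*}$) is an automorphism of $(F,+)$, the only subgroups invariant under all of them are $\{0\}$ and $F$, and the identity $(-1)x=-x$ would make inversion an endomorphism and hence force commutativity. The characteristic~$2$ case is handled correctly. But you yourself flag the genuine gap: when $1+1\neq 0$ you do not prove $(-1)x=-x$, you only gesture toward the classical literature. This is indeed the nontrivial core of the theorem; note that left distributivity gives $x(-1)=-x$ immediately from $x(1+(-1))=0$, but that is the wrong side, and the relation you propose, $a(a+b)^{-1}+b(a+b)^{-1}=1$, would need right distributivity to derive, which you do not have --- so even the hint is not quite right. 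In short, your proposal is an honest outline that stops at the hard step, which is precisely what the paper does too, by citing rather than proving.
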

To construct finite Dickson nearfields, we need two concepts: 
 \begin{defn} (\cite{pilz2011near})
A pair of numbers $(q,n) \in \mathbb{N}^2$ is called a Dickson pair if
$q$ is some power $p^l$ of a prime $p$, each prime divisor of $n$ divides $q-1$,  $q \equiv 3$ $ \text{mod } 4$ implies $4$ does not divide $n$.
\end{defn}

 \begin{defn}(\cite{pilz2011near})
Let $R$ be a nearfield and $\textit{Aut} (R,+,\cdot ) $ the set of all automorphisms of $N$. A map
\begin{align*}
\phi: \quad & R^* \to  \textit{Aut} (R,+,\cdot )  \\ 
& n \mapsto \phi_n
\end{align*}
is called a coupling map if for all $n,m \in R^*, \phi _n \circ \phi_m= \phi _{ \phi _n (m) \cdot n}.$
\end{defn}
The first  proper finite nearfield was discovered by  Dickson in $1905$. He distorted the multiplication of a finite field. For all pairs of Dickson numbers $(q,n)$, there exists some associated finite Dickson nearfields, of order $q^n$ which arise by taking the Galois field $GF(q^n)$ and changing the multiplication. Thus $DN(q,n)= (GF(q^n), +, \cdot) ^ { 
\phi} = \big ( GF(q^n), +, \circ \big )$. We will use $DN(q,n)$ to  denote a Dickson nearfield arising from the Dickson pair $(q,n)$. For more details on the construction of the new multiplication \say{$\circ$}  we refer the reader to \cite{dickson1905finite,pilz2011near}.
\begin{exa} (\cite{pilz2011near})
\label{ex2}Consider the field ($GF(3^{2})$, $+$, $\cdot$) with
\[GF(3^{2}) := \{0,1,2,x,1+x,2+x,2x,1+2x,2+2x\},\]
where $x$ is a zero of $x^{2}+1 \in \Bbb{Z}_{3}[x]$ with the new multiplication defined as

$$
a \circ b := \left\{\begin{array}{cc}
              a \cdot b     & \mbox{ if $a$  is a square in ($GF(3^{2})$, $+$, $\cdot$)}\\
              a \cdot b^3 & \mbox{ otherwise }
              \end{array}
\right.
$$
This gives the smallest finite  Dickson nearfield $DN(3,2):=(GF(3^{2})$, $+$, $\circ)$, which is not a field. Here is the table of the new operation $ \circ$ for $DN(3,2)$.

\[
 \begin{array}{r|ccccccccc}
\circ     & 0 & 1             & 2            & x     & 1+x  & 2+x  & 2x   & 1+2x & 2+2x \\ \hline
0         & 0 & 0             & 0            & 0          & 0         & 0         & 0         & 0         & 0\\ 
1         & 0 & 1             & 2            & x     & 1+x  & 2+x  & 2x   & 1+2x & 2+2x \\    
2         & 0 & 2             & 1            & 2x    & 2+2x & 1+2x & x    & 2+x  & 1+x \\    
x    & 0 & x        & 2x      & 2          & 1+2x & 1+x  & 1         & 2+2x & 2+x \\    
1+x  & 0 & 1+x      & 2+2x    & 2+x   & 2         & 2x   & 1+2x & x    & 1 \\    
2+x  & 0 & 2+x      & 1+2x    & 2+2x  & x    & 2         & 1+x  & 1         & 2x \\    
2x   & 0 & 2x       & x       & 1          & 2+x  & 2+2x & 2         & 1+x  & 1+2x \\    
1+2x & 0 & 1+2x     & 2+x     & 1+x   & 2x   & 1         & 2+2x & 2         & x \\    
2+2x & 0 & 2+2x     & 1+x     & 1+2x  & 1         & x    & 2+x  & 2x   & 2     
 \end{array}
\]

We will refer to this example  in later sections.
\end{exa}

 The concept of a ring module can be  extended to a more general concept called  a nearring module where the set of scalars is taken to be a nearring.
\begin{defn}
An additive group $(M,+)$ is called (right) nearring module over a (left) nearring $R$ if there exists a mapping,
\begin{align*}
\eta: \thickspace & M \times R \to M \\
& (m,r) \to mr
\end{align*} such that $m(r_1+r_2)=mr_1+mr_2$ and $m(r_1r_2)= (mr_1)r_2$ for all $r_1,r_2 \in R$ and $m \in M.$

We write $M_R$ to denote that $M$ is a  (right) nearring module over a (left) nearring  $R$.
\end{defn}
\begin{defn}(\cite{beidleman1966near})
A subset $A$ of a nearring module $M_R$ is called a $R$-subgroup if  $A$ is a subgroup of $(M,+),$ and  $AR= \lbrace ar \vert a \in A, r \in R \rbrace \subseteq A. $
\end{defn}
\begin{defn} (\cite{beidleman1966near})
A nearring module $M_R$ is said to be irreducible if $M_R$ contains no proper $R$-subgroups. In other words, the only $R$-subgroups of $M_R$ are $M_R$ and $\lbrace 0 \rbrace.$
\end{defn}
\begin{cor}(\cite{beidleman1966near})
Let $M_R$ be a unitary $R$-module. Then $M_R$ is irreducible if and only if $mR=M_R$ for every non-zero element $m \in M.$
\label{cor}
\end{cor}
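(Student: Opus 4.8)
The plan is to prove both implications directly from the definition of irreducibility, the crucial observation being that for any nonzero $m\in M$ the set $mR=\{mr\mid r\in R\}$ is an $R$-subgroup of $M_R$, and that it is \emph{nonzero} precisely because the module is unitary. So I would first record the elementary fact that $m0=0$ for all $m\in M$: from the axiom $m(r_1+r_2)=mr_1+mr_2$ we get $m0=m(0+0)=m0+m0$, and cancelling in the group $(M,+)$ yields $m0=0$; consequently $-(mr)=m(-r)$, since $mr+m(-r)=m(r+(-r))=m0=0$.

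Next I would verify that, for a fixed nonzero $m\in M$, the set $mR$ is an $R$-subgroup. It is closed under addition because $mr_1+mr_2=m(r_1+r_2)\in mR$; it contains $0=m0$ and is closed under additive inverses by the computation above; and it satisfies $(mR)R\subseteq mR$ because $(mr_1)r_2=m(r_1r_2)\in mR$. Since $M_R$ is unitary, $m=m1\in mR$, so $mR\neq\{0\}$. With this in hand the forward implication is immediate: assuming $M_R$ is irreducible and taking any nonzero $m\in M$, the nonzero $R$-subgroup $mR$ must equal $M_R$.

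For the converse, assume $mR=M_R$ for every nonzero $m\in M$, let $A$ be an arbitrary nonzero $R$-subgroup, and choose $a\in A$ with $a\neq 0$. Then $M_R=aR\subseteq A$ because $A$ is an $R$-subgroup, so $A=M_R$; hence the only $R$-subgroups of $M_R$ are $\{0\}$ and $M_R$, which is the definition of irreducibility. There is no genuine obstacle in this argument; the only point deserving care is that the unitary hypothesis is actually used (it is what forces $m\in mR$, hence $mR\neq\{0\}$), so one should not expect the statement to survive its removal.
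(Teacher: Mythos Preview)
Your proof is correct and is the standard argument: $mR$ is an $R$-subgroup, the unitary hypothesis forces $m\in mR$ so that $mR\neq\{0\}$, and then both implications follow immediately from the definition of irreducibility. The paper does not actually supply its own proof of this corollary; it is quoted verbatim from Beidleman's thesis with only the citation \cite{beidleman1966near}, so there is nothing to compare against beyond noting that your argument is exactly the expected one.
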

\begin{defn} (\cite{beidleman1966near})Let $M_R$ be a nearring module.
$ N  $ is a submodule  of $M_R$ if :
\begin{itemize}
\item $ (N,+)$ is normal subgroup of $(M,+),$
\item $(m+n)r-mr \in N$ for all $m \in M, n \in N$ and $r \in R.$
\end{itemize}
\end{defn}

\begin{pro}(\cite{beidleman1966near})
Let $N$ be a submodule of $M_R.$ Then $N$ is a $R$-subgroup of $M_R.$
\label{pro}
\end{pro}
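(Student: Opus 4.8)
The claim packs two requirements into the definition of an $R$-subgroup: that $(N,+)$ is a subgroup of $(M,+)$, and that $NR\subseteq N$. The first is immediate, since by definition a submodule is a \emph{normal} subgroup of $(M,+)$, and a normal subgroup is in particular a subgroup. So the entire content of the statement is the closure property $nr\in N$ for all $n\in N$ and $r\in R$, and that is what I would concentrate on.

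The plan is to exploit the defining identity of a submodule, namely $(m+n)r-mr\in N$ for all $m\in M$, $n\in N$, $r\in R$, by specialising $m$ to $0_M$. This gives $(0_M+n)r-0_M r = nr-0_M r\in N$, so it remains only to observe that $0_M r = 0_M$. This last point is where one uses that the scalar structure is a nearfield rather than an arbitrary nearring: a nearfield is zero-symmetric, i.e.\ $0_R r=0_R$ for every $r\in R$, and hence, picking any $m\in M$ and using the module axiom $m(r_1 r_2)=(m r_1)r_2$ together with $m\,0_R=0_M$ (which itself follows from $m(r_1+r_2)=mr_1+mr_2$ applied with $r_1=r_2=0_R$), one gets $0_M r=(m\,0_R)r=m(0_R r)=m\,0_R=0_M$. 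Substituting this back into $nr-0_M r\in N$ yields $nr\in N$, i.e.\ $NR\subseteq N$. Combined with the first paragraph, $N$ is an $R$-subgroup of $M_R$, as claimed.

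There is no genuinely hard step here; the statement is essentially a one-line consequence of the defining identity once $m=0_M$ is inserted. The only point that needs care — and the only reason the argument is not completely formal — is the equality $0_M r=0_M$, which can fail over a nearring whose zero is not a left-absorbing element, so one must invoke that we are working with nearfield scalars (equivalently, with modules in which $0_M$ absorbs the action). Conceptually, the substitution $m=0_M$ is just the observation that the submodule condition is exactly what makes the $R$-action descend to the quotient group $M/N$, with $N$ mapping to $0$; one could phrase the proof in that language, but the direct computation above is the shortest route.
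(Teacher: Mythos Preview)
Your argument is correct. The paper does not actually supply its own proof of this proposition; it is quoted from Beidleman's thesis without demonstration, so there is nothing to compare against directly. That said, the specialisation $m=0_M$ you use is precisely the device the paper itself invokes later (in the proof that every subspace is closed under scalar multiplication), so your approach is fully in line with the paper's methods. Your care in justifying $0_M r = 0_M$ via the zero-symmetry of a nearfield is a genuine and correct point: over a non-zero-symmetric nearring this identity, and hence the proposition itself, can fail (the trivial submodule $\{0_M\}$ need not be an $R$-subgroup), so the hypothesis really is needed and Beidleman's original formulation presumably carries it implicitly.
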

 Note that the converse of this proposition is not true in general. In his   thesis (\cite{beidleman1966near}, page $14$) Beidleman gives a counter example. However,
\begin{lem}
If $M_R$  is a ring module,  then the notions of $R$-subgroup and submodule of $M_R$ coincide.
\end{lem}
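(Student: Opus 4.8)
The plan is to show that the extra condition in the definition of a submodule---namely that $(m+n)r - mr \in N$ for all $m \in M$, $n \in N$, $r \in R$---is automatically satisfied when $R$ is a ring (so that $M_R$ is an ordinary ring module), provided $A$ is already an $R$-subgroup. Combined with Proposition \ref{pro}, which gives that every submodule is an $R$-subgroup, this yields the claimed coincidence of the two notions. So the real content is the implication "$R$-subgroup $\Rightarrow$ submodule" in the ring-module case.

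First I would take an arbitrary $R$-subgroup $N$ of the ring module $M_R$ and verify the two bullet points in the definition of a submodule. For the first bullet, $(N,+)$ is a subgroup of $(M,+)$ by hypothesis, and since $M$ is a ring module its additive group is abelian (indeed every subgroup of an abelian group is normal), so $(N,+)$ is normal in $(M,+)$. For the second bullet, I would expand $(m+n)r$ using the \emph{right} distributive law, which is available now because $R$ is a ring and $M$ is a genuine (two-sided) module: $(m+n)r = mr + nr$. Hence $(m+n)r - mr = nr$, and since $n \in N$ and $N$ is an $R$-subgroup we have $nr \in NR \subseteq N$. This establishes that $N$ is a submodule. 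The reverse inclusion of notions is exactly Proposition \ref{pro}, so the two coincide.

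The step that deserves the most care---and is the only place the ring hypothesis is genuinely used---is the appeal to right distributivity $(m+n)r = mr + nr$. In a general nearring module only \emph{left} distributivity in the scalar, $m(r_1+r_2) = mr_1 + mr_2$, is postulated; there is no reason for additivity in the module argument to hold, and this is precisely why Beidleman's counterexample (cited in the excerpt) works for nearrings. Once one is in the ring-module setting this identity is part of the module axioms, and everything else is a routine unwinding of definitions. I would therefore state explicitly in the write-up that "ring module" is being used in the standard two-sided sense so that $(m+n)r = mr+nr$ is available, to make transparent where the nearring/ring dichotomy enters.
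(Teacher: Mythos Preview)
Your proof is correct and follows essentially the same route as the paper's: use Proposition~\ref{pro} for one direction, and for the other expand $(m+n)r - mr = nr$ via the right distributive law available in a ring module, then invoke closure of the $R$-subgroup under scalar multiplication. The paper's argument is more terse (it simply writes $hr = (m+h)r - mr$ and cites $hr \in H$) and does not pause to note normality, but the mathematical content is identical.
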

\begin{proof}
By Proposition \ref{pro}, every $R$-submodule is a $R$-subgroup. Let $H$ be a $R$-subgroup of $M_R.$ Then $hr \in H$ for all $h \in H$ and $r \in R.$ But $hr=(m+h)r-mr$ for all $m \in M.$ Hence $H$ is a submodule of $M_R.$
\end{proof}
\begin{thm}(\cite{beidleman1966near})
Let $R$ be a nearring that contains a right identity element $e \neq 0.$ $R$ is division nearring if and only if $R$ contains no proper $R$-subgroups.
\label{irre}
\end{thm}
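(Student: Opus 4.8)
The plan is to prove the two implications separately, using only the (left) distributive law, the given right identity $e$, and the easy fact that each set $aR=\{ar:r\in R\}$ is an $R$-subgroup of $R_R$.

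For \emph{division nearring} $\Rightarrow$ \emph{no proper $R$-subgroups}: write $\iota$ for the identity of the group $(R^*,\cdot)$ and first note that $\iota$ is a left identity for all of $R$, since $\iota r=r$ for $r\in R^*$ by the group axiom, while $\iota\cdot 0=\iota(0+0)=\iota\cdot 0+\iota\cdot 0$ forces $\iota\cdot 0=0$. Now if $A$ is an $R$-subgroup with $A\neq\{0\}$, choose $0\neq a\in A$; then $\iota=aa^{-1}\in AR\subseteq A$, hence $r=\iota r\in AR\subseteq A$ for every $r\in R$, so $A=R$. Thus the only $R$-subgroups of $R_R$ are $\{0\}$ and $R$.

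For the converse, assume $R$ has no proper $R$-subgroups and proceed in four short steps. (i) For any $a\in R$, the set $aR$ is an additive subgroup of $(R,+)$ because $ar_1+ar_2=a(r_1+r_2)$ and $ar+a(-r)=a\cdot 0=0$ by left distributivity, and $(aR)R\subseteq aR$ by associativity, so $aR$ is an $R$-subgroup. (ii) Applied to $e$: since $e=e\cdot e\neq 0$ the $R$-subgroup $eR$ is nonzero, hence $eR=R$; writing an arbitrary $s\in R$ as $s=er$ gives $es=(ee)r=er=s$, so $e$ is a two-sided identity, call it $1$. (iii) For $0\neq a\in R$, the $R$-subgroup $aR$ contains $a=a1\neq 0$, so $aR=R$ and there is $b$ with $ab=1$; since $b\neq 0$ (otherwise $ab=a\cdot 0=0\neq 1$), the same argument gives $c$ with $bc=1$, whence $c=1c=(ab)c=a(bc)=a1=a$, so $ba=1$ as well and $b$ is a two-sided inverse of $a$. (iv) There are no zero divisors: if $xy=0$ with $x\neq 0$, then $y=(x^{-1}x)y=x^{-1}(xy)=x^{-1}\cdot 0=0$. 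Steps (iii) and (iv) show $(R^*,\cdot)$ is closed and every element of it has an inverse, so together with associativity and $1\in R^*$ it is a group, i.e.\ $R$ is a division nearring.

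The closure checks in (i) and the identity bookkeeping in (ii) are routine. The part that carries the real content --- and the one I expect to be the main obstacle --- is packaged in steps (iii) and (iv): the hypothesis only hands us $aR=R$, i.e.\ \emph{right} inverses, and without a second distributive law one must work a little to upgrade these to two-sided inverses (by taking a right inverse of a right inverse) and then, separately, to rule out zero divisors, so that $(R^*,\cdot)$ is genuinely closed and hence a group.
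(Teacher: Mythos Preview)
Your proof is correct, but there is nothing to compare it against: in the paper this theorem is quoted from Beidleman's thesis \cite{beidleman1966near} and stated without proof. It serves only as a tool (for instance in the Remark following it and in Corollary~\ref{th4}), so the authors did not reproduce an argument.

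On the mathematics itself, your argument is clean and uses exactly the right ingredients. The only places worth a second glance are harmless: in step~(i) you implicitly use $a\cdot 0=0$, which does follow from the \emph{left} distributive law via $a\cdot 0=a(0+0)=a\cdot 0+a\cdot 0$, and you never need the unguaranteed identity $0\cdot a=0$. Your handling of the genuinely delicate point --- upgrading the right inverse coming from $aR=R$ to a two-sided inverse by iterating (``right inverse of a right inverse''), and then separately checking closure of $R^*$ via absence of zero divisors --- is the standard route in this one-sided setting and is carried out correctly.
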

\begin{rem}
Let $R$ be a nearfield. By Theorem \ref{irre}, $R_R$  is irreducible $R$-module. Thus $R$ contains only $\{ 0 \}$ and $R$ as submodules of $R_R.$
\end{rem}

\subsection{Internal direct sum of submodules}
\begin{defn}(\cite{beidleman1966near})
Let $\lbrace M_i \vert i \in I \rbrace $ be a collection of submodules  of the nearring module $M_R$.  $M_R$ is said to be a direct sum of the submodules $ M_i,$ for  $ i \in I, $ if  the additive group $(M,+)$ is a direct sum of the normal subgroups $ (M_i,+),$ for  $ i \in I $. In this case we write $M_R= \bigoplus _{i \in I} M_i.$
\end{defn}

\begin{pro}(\cite{beidleman1966near})
$M_R= \sum _ {i \in I} M_i$ and every element of $M_R$ has a unique representation as a finite sum of elements chosen from the submodules $M_i$ if and only if $ M_R= \sum _{i \in I} M_i$ and $M_k \cap \sum _{i \in I, i \ne k} M_i= \lbrace 0 \rbrace.$
\end{pro}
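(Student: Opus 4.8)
The statement is the classical characterization of an internal direct sum, and the plan is to reduce it entirely to a statement about the additive group $(M,+)$ together with the family of normal subgroups $\{(M_i,+):i\in I\}$ — by the definition of a submodule each $(M_i,+)$ is normal in $(M,+)$ — since the scalar multiplication plays no role at all. Throughout, by a \emph{representation} of $m\in M$ I mean a family $(m_i)_{i\in I}$ with $m_i\in M_i$, all but finitely many $m_i$ equal to $0$, and $m=\sum_{i\in I}m_i$; granting $M=\sum_{i\in I}M_i$, what must be shown is that uniqueness of representations is equivalent to $M_k\cap\sum_{i\neq k}M_i=\{0\}$ for every $k$.

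First I would treat the forward implication, which is essentially immediate. Assuming uniqueness, fix $k\in I$ and take $x\in M_k\cap\sum_{i\in I,\,i\neq k}M_i$. Then $x$ has the representation whose $k$-th component is $x$ and whose other components vanish, and, because $x\in\sum_{i\neq k}M_i$, it also has a representation $(m_i)$ with $m_k=0$. Uniqueness forces $x=m_k=0$, which is the desired intersection condition.

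For the converse I would argue as follows. Assume $M=\sum_{i\in I}M_i$ and $M_k\cap\sum_{i\neq k}M_i=\{0\}$ for all $k$. The first step — the one place where care is needed, since $(M,+)$ need not be abelian — is to check that elements of distinct summands commute: for $i\neq j$, $a\in M_i$, $b\in M_j$, the commutator $a+b-a-b$ lies in $M_j$ (since $a+b-a\in M_j$ by normality of $M_j$) and in $M_i$ (since $a+b-a-b=a+(b-a-b)$ with $b-a-b\in M_i$ by normality of $M_i$), hence in $M_i\cap M_j\subseteq M_i\cap\sum_{\ell\neq i}M_\ell=\{0\}$, so $a+b=b+a$. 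Consequently a finite sum of elements drawn from the $M_i$ does not depend on the order of its terms and may be regrouped by index. Then, given two representations $\sum_i m_i=\sum_i m_i'$ of one element, I enlarge both index sets by zeros to a common finite support $S$, fix $k\in S$, and rewrite the equality as $m_k+a=m_k'+a'$ with $a=\sum_{i\in S,\,i\neq k}m_i$ and $a'=\sum_{i\in S,\,i\neq k}m_i'$ lying in the subgroup $\sum_{i\neq k}M_i$; then $-m_k'+m_k=a'-a$ lies in $M_k\cap\sum_{i\neq k}M_i=\{0\}$, so $m_k=m_k'$, and uniqueness follows.

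I do not expect a genuine obstacle: once the problem is phrased in terms of normal subgroups of $(M,+)$, this is the standard group-theoretic argument, and the bookkeeping with finite supports is routine. The only mildly delicate point is the commutativity of components coming from distinct $M_i$, and it is exactly here that normality of the $M_i$ — rather than their merely being $R$-subgroups, cf.\ Proposition \ref{pro} — is used; whenever $(M,+)$ is abelian (for instance for $M=R^n$ with $R$ a nearfield, since $(R,+)$ is abelian by Theorem \ref{tt}), that step is vacuous.
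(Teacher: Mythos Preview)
Your argument is correct. The paper itself does not supply a proof of this proposition: it is quoted from Beidleman's thesis and stated without argument, so there is nothing in the paper to compare against. Your reduction to the underlying additive group and its normal subgroups, together with the commutator computation showing that elements of distinct summands commute, is exactly the standard proof one finds for internal direct sums of normal subgroups; the remaining bookkeeping with finite supports is handled cleanly. In short, your proof is sound and is the expected one, even though the paper offers no proof of its own to benchmark it against.
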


We also have that
\begin{pro}(\cite{beidleman1966near})
Let $\lbrace M_i \vert \thickspace  i \in I \rbrace $ be a collection of submodules  of the nearring module $M_R$. Then $M_R = \bigoplus _{i \in I} M_i$ implies that $M_R= \sum_{i \in I } M_i$ and the elements of any two distinct submodules permute.
\label{rp}
\end{pro}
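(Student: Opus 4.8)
The plan is to reduce both assertions to a classical fact about internal direct sums of (not necessarily abelian) groups, since neither conclusion actually involves the scalar action of $R$. First, by the very definition of $M_R = \bigoplus_{i\in I} M_i$, the additive group $(M,+)$ is the internal direct sum of the normal subgroups $(M_i,+)$; in particular every element of $M$ is a finite sum of elements chosen from the $M_i$, which is exactly the assertion $M_R = \sum_{i\in I} M_i$. Moreover, applying the preceding proposition (the characterisation of direct sums in terms of unique representations), we obtain $M_k \cap \sum_{i\in I,\, i\neq k} M_i = \{0\}$ for every $k\in I$, and hence in particular $M_i \cap M_j = \{0\}$ whenever $i\neq j$.

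For the permuting statement, I would fix $i\neq j$, pick $m_i\in M_i$ and $m_j\in M_j$, and examine the commutator $c = m_i + m_j - m_i - m_j$. Bracketing it as $c = (m_i + m_j - m_i) - m_j$ and using that $(M_j,+)$ is normal in $(M,+)$ (so $m_i + m_j - m_i\in M_j$) shows $c\in M_j$; bracketing it instead as $c = m_i + (m_j - m_i - m_j)$ and using that $(M_i,+)$ is normal (so $m_j - m_i - m_j\in M_i$) shows $c\in M_i$. Therefore $c\in M_i\cap M_j = \{0\}$, i.e. $m_i + m_j = m_j + m_i$, which is precisely the claim that elements of two distinct submodules permute.

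I do not expect a genuine obstacle here: the module axioms play no role, so the statement is purely a fact about the underlying additive group. The only points requiring a little care are choosing the two bracketings of the commutator so that normality of each summand can be invoked, and correctly citing the direct-sum characterisation to get the trivial pairwise intersections $M_i\cap M_j=\{0\}$; everything else is immediate from the definitions.
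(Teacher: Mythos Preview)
The paper does not supply its own proof of this proposition: it is quoted from Beidleman's thesis with a citation and no argument. So there is nothing in the paper to compare your proof against.

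That said, your proof is correct and is exactly the standard group-theoretic argument one would expect. The first assertion, $M_R=\sum_{i\in I}M_i$, is immediate from the definition of internal direct sum. For the permuting assertion, your commutator calculation is the canonical one: with $c=m_i+m_j-m_i-m_j$, the bracketing $(m_i+m_j-m_i)-m_j$ together with normality of $M_j$ gives $c\in M_j$, while the bracketing $m_i+(m_j-m_i-m_j)$ together with normality of $M_i$ gives $c\in M_i$; hence $c\in M_i\cap M_j=\{0\}$. Your appeal to the preceding proposition to obtain $M_k\cap\sum_{i\neq k}M_i=\{0\}$ (and hence the pairwise intersections are trivial) is also the right move, and your observation that the $R$-action plays no role is entirely accurate.
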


%
%

According to the definition of a nearring module, we do not have distributivity of elements of $R$ over the elements of $M$. If we consider $M_R$ as direct sum of the  collection of submodules  $\lbrace M_i \vert \thickspace i \in I \rbrace$ of the nearring module $M_R$, then the following result will allow us to distribute the elements of $R$ over the elements which are contained in distinct submodules in the direct sum. The result is useful in the concept of Beidleman near-vector spaces.

\begin{lem}(\cite{beidleman1966near})Let $M_R = \bigoplus _{i \in I} M_i,$  $M_i$ is a submodule of $M_R.$ If $m =\sum_{i \in I} m_i$ where $m_i \in M_i$ and $r \in R$ then
\begin{align*}
mr= \big ( \sum_{i \in I} m_i \big ) r= \sum_{i \in I}( m_ir).
\end{align*}
\label{lemm}
\end{lem}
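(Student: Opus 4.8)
The plan is to reduce the claim to the two-summand case and then argue by induction on the number of summands appearing in the (necessarily finite) representation $m=\sum_{i\in I}m_i$. Write $m = m_{i_1}+\cdots+m_{i_k}$ with $m_{i_j}\in M_{i_j}$ and the index set $\{i_1,\dots,i_k\}$ distinct; all but finitely many terms are $0$ by the definition of direct sum, so this is legitimate. The base case $k=1$ is immediate since $m=m_{i_1}\in M_{i_1}$ and there is nothing to distribute. For the inductive step it suffices to prove the statement for $k=2$, i.e.\ that $(a+b)r = ar+br$ whenever $a\in M_s$, $b\in M_t$ with $s\neq t$; the general case then follows by writing $m = (m_{i_1}+\cdots+m_{i_{k-1}}) + m_{i_k}$, noting that the first bracket lies in $\sum_{j<k}M_{i_j}$, applying the two-summand case, and then the induction hypothesis to $(m_{i_1}+\cdots+m_{i_{k-1}})r$.

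The heart of the matter is therefore the two-summand identity, and here is where I would use the submodule axioms rather than any distributivity of $R$ over $M$ (which we do not have). Since $M_R=\bigoplus_{i\in I}M_i$, each $M_i$ is in particular a submodule, so by Proposition \ref{pro} it is an $R$-subgroup; more importantly, by the submodule axiom, $(m+n)r-mr\in N$ for every submodule $N$, every $m\in M$, $n\in N$, $r\in R$. Apply this with $N=M_t$, $m=a$, $n=b$: we get $(a+b)r - ar \in M_t$. On the other hand I want to show $(a+b)r-ar = br$. Consider the element $c:=(a+b)r-ar$. One shows $c\in M_t$ as above; to pin down $c=br$ I would use Proposition \ref{rp}, which tells us elements of distinct submodules permute, together with the fact (from the same circle of ideas in Beidleman's development, or directly from the normality of the $(M_i,+)$ and the defining property of the direct sum) that the projection of $(a+b)r$ onto the $M_t$-component equals $br$ while its projection onto the complementary part $\sum_{i\neq t}M_i$ equals $ar$. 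Concretely: write $(a+b)r = u+v$ with $u\in\sum_{i\neq t}M_i$ and $v\in M_t$ (using that $M_R$ is the internal direct sum, so every element decomposes uniquely). Then $(a+b)r-ar = (u-ar)+v$; since $(a+b)r-ar\in M_t$ and $u-ar\in\sum_{i\neq t}M_i$, uniqueness of the decomposition forces $u-ar=0$ and hence $(a+b)r-ar=v\in M_t$. Finally, apply the submodule axiom again with the roles of $a$ and $b$ swapped and $N=\sum_{i\neq t}M_i$ (still a submodule, being a sum of submodules) to get $(a+b)r - br\in\sum_{i\neq t}M_i$; combining, $v = (a+b)r-ar$ and $(a+b)r-br\in\sum_{i\neq t}M_i$ give, after subtracting, $br - ar + ar = br$ lands correctly, yielding $(a+b)r = ar+br$ once the permutability of Proposition \ref{rp} is invoked to reorder.

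The main obstacle I anticipate is precisely this bookkeeping with the two "halves" of the decomposition: making rigorous that the $M_t$-component of $(a+b)r$ is $br$ and the complementary component is $ar$, using only the submodule axioms plus Propositions \ref{pro} and \ref{rp}, without accidentally assuming the very distributivity we are trying to establish. I would handle it by being careful to only ever invoke $(x+y)r-xr\in N$ for a genuine submodule $N$ and the uniqueness of direct-sum decompositions, and to use Proposition \ref{rp} (permutability of elements of distinct summands) to rearrange sums like $ar+br$ versus $br+ar$. Everything else — the reduction to two summands and the induction on $k$ — is routine.
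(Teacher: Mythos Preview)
The paper does not supply its own proof of this lemma; it is quoted from Beidleman's thesis \cite{beidleman1966near} as a known result, so there is no in-paper argument to compare against.

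Your strategy is the standard one and is sound: reduce to a finite sum, induct on the number of nonzero summands, and for the two-summand case combine the submodule axiom $(m+n)r-mr\in N$ with uniqueness of direct-sum decompositions and the permutability of Proposition~\ref{rp}. Two points deserve tightening. First, in the inductive step you apply the ``two-summand case'' with $a=m_{i_1}+\cdots+m_{i_{k-1}}$, which lies not in a single $M_s$ but in the submodule $N_1=\bigoplus_{j<k}M_{i_j}$; the argument still goes through because $N_1$ is a submodule (hence an $R$-subgroup, so $a r\in N_1$) and elements of $N_1$ permute with those of $M_{i_k}$ by iterating Proposition~\ref{rp}, but you should say so explicitly. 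Second, the closing sentence (``after subtracting, $br-ar+ar=br$ lands correctly'') does not parse as written. The clean way to finish is: from $(a+b)r-ar\in M_t$ and (via permutability) $(a+b)r-br\in\textstyle\sum_{i\neq t}M_i$, together with $ar\in\sum_{i\neq t}M_i$ and $br\in M_t$, the unique decomposition $(a+b)r=u+v$ with $u\in\sum_{i\neq t}M_i$, $v\in M_t$ is forced to satisfy $u=ar$ and $v=br$, giving $(a+b)r=ar+br$ directly. With these clarifications your proof is complete.
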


\subsection{Beidleman near-vector spaces}

In  \cite{andre1974lineare}, the concept of a vector space (or linear space) is generalised by Andr\'e to a less linear structure which he called a near-vector space. Andr\'e near-vector spaces use automorphisms in the construction, resulting in the right distributive law holding. However, for Beidleman near-vector spaces, we have the left distributive law holding and nearring modules are used in the construction.

\begin{defn}(\cite{beidleman1966near})
A nearring module $M_R$ is called strictly semi-simple if $M_R$ is a direct sum of irreducible submodules.
\end{defn}
We now have,
\begin{defn}(\cite{beidleman1966near}) Let $(M,+)$ be a group.
$M_R$ is called Beidleman near-vector space if $M_R$ is a strictly semi-simple $R$-module  where $R$ is a nearfield.
\end{defn}

 The simplest example of a Beidleman near-vector space is obtained when $M_R$ itself is an irreducible $R$-module.
\begin{lem} (\cite{beidleman1966near}) 
Let $R$ be a nearfield and $M_R$ an irreducible $R$-module. Then $M_R$ is a Beidleman near-vector space. Moreover, 
\begin{align*} 
M_R \cong R_R.
\end{align*}
\end{lem}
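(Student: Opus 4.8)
The plan is to treat the two assertions separately, the first being essentially a formality and the second requiring a short but careful argument.

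For the claim that $M_R$ is a Beidleman near-vector space, I would first observe that $M_R$ is a submodule of itself: $(M,+)$ is trivially a normal subgroup of $(M,+)$, and $(m+n)r-mr\in M$ holds for all $m,n\in M$ and $r\in R$ simply because every element lies in $M$. Hence $M_R$ can be written as the one-term direct sum $M_R=\bigoplus_{i\in\{1\}}M_i$ with $M_1=M_R$, and since $M_1$ is irreducible by hypothesis, $M_R$ is a direct sum of irreducible submodules, i.e. strictly semi-simple. As $R$ is a nearfield, this is exactly the definition of a Beidleman near-vector space.

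For the isomorphism $M_R\cong R_R$, I would first recall from Theorem \ref{irre} (equivalently, from the Remark following it) that $R_R$ is itself an irreducible unitary $R$-module. Since $M_R$ is a nonzero irreducible unitary module, pick any $m_0\in M$ with $m_0\neq 0$ and define $\varphi:R_R\to M_R$ by $\varphi(r)=m_0 r$. This is an $R$-module homomorphism: $\varphi(r_1+r_2)=m_0(r_1+r_2)=m_0 r_1+m_0 r_2=\varphi(r_1)+\varphi(r_2)$ by the left distributive law in $M_R$, and $\varphi(r_1 r_2)=m_0(r_1 r_2)=(m_0 r_1)r_2=\varphi(r_1)r_2$ by the module axiom. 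Surjectivity of $\varphi$ is immediate from Corollary \ref{cor}, which gives $m_0 R=M_R$.

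The one step that needs care is injectivity. Suppose $\varphi(r)=m_0 r=0$ but $r\neq 0$. Since $R$ is a nearfield, $r^{-1}$ exists in $R^*$, and using that $M_R$ is unitary together with the module axiom, $m_0=m_0\cdot 1=m_0(r\,r^{-1})=(m_0 r)r^{-1}=0\cdot r^{-1}=0$, where $0\cdot r^{-1}=0$ follows from $0\,r^{-1}=(0+0)r^{-1}=0\,r^{-1}+0\,r^{-1}$. This contradicts the choice of $m_0$, so $\ker\varphi=\{0\}$ and $\varphi$ is a bijective module homomorphism, hence an isomorphism $R_R\cong M_R$. The main (minor) obstacle throughout is keeping track of the unitary hypothesis: it is needed both to invoke Corollary \ref{cor} and to get $m_0\cdot 1=m_0$ in the injectivity argument, while the absence of a right distributive law never intervenes because all the algebra takes place on the correct side.
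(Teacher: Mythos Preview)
The paper does not supply its own proof of this lemma; it is quoted from Beidleman's thesis without argument, so there is nothing to compare against directly. Your proof is correct and is essentially the standard one.

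Two small remarks. First, you invoke the irreducibility of $R_R$ from Theorem~\ref{irre} but never actually use it: your injectivity argument relies only on invertibility of nonzero elements in the nearfield $R$, not on $R_R$ having no proper $R$-subgroups. An alternative (and slightly cleaner) route would be to observe that $\ker\varphi$ is a submodule of $R_R$, hence either $\{0\}$ or $R$ by irreducibility, and rule out $R$ since $\varphi(1)=m_0\neq 0$. Second, as you rightly flag, the unitary hypothesis on $M_R$ is not stated explicitly in the lemma but is implicit in the surrounding context (and in Beidleman's original), and your proof uses it at exactly the two places you identify.
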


As with  vector spaces, we have the notion of a basis and dimension.

\begin{defn}(\cite{beidleman1966near})
A non-empty subset $X$ of a near-vector space $M_R$ is called a basis if $X$ is a spanning set for $M_R$ and  the representation of the elements of $M_R$ as a linear combinations of the elements of $X$ is unique.
\end{defn}
\begin{thm}(\cite{beidleman1966near})
If $M_R$ is a near-vector space, then $M_R$ has a basis.
\end{thm}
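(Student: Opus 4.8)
The plan is to extract a basis directly from the decomposition that comes with the definition. Being a Beidleman near-vector space, $M_R=\bigoplus_{i\in I}M_i$ with each $M_i$ an irreducible $R$-submodule and $R$ a nearfield; by the lemma recalled above, each $M_i$ is isomorphic, as an $R$-module, to $R_R$. The first step is to fix for every $i\in I$ an isomorphism $\varphi_i\colon M_i\to R_R$, to set $u_i:=\varphi_i^{-1}(1)$, and to propose $X:=\{u_i\mid i\in I\}$ as the basis. Assuming $M_R\neq\{0\}$ this set is non-empty, and its elements are pairwise distinct because $u_i\neq 0$ while $M_i\cap M_j=\{0\}$ for $i\neq j$. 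The one computation that matters is $\varphi_i(u_i r)=\varphi_i(u_i)\,r=1\cdot r=r$ for every $r\in R$, valid because $\varphi_i$ is a module homomorphism and the action of $R$ on $R_R$ is just multiplication, with $1$ its identity; this shows that $r\mapsto u_i r$ is inverse to $\varphi_i$, hence a bijection $R\to M_i$, so $M_i=u_iR$ and $u_ir=0$ implies $r=0$.

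With that in place I would check the two requirements of a basis. Spanning: given $m\in M_R$, write $m=\sum_{i\in J}m_i$ for a finite $J\subseteq I$ and $m_i\in M_i$, which is possible because $(M,+)$ is the internal direct sum of the groups $(M_i,+)$; then each $m_i$ equals $u_ir_i$ for a unique $r_i\in R$ by the previous step, so $m=\sum_{i\in J}u_ir_i$ is a linear combination of elements of $X$. Uniqueness: suppose $\sum_{i\in J}u_ir_i=\sum_{i\in J}u_is_i$ over a common finite $J$ (pad with zero coefficients if necessary). The key is to carry out the cancellation one summand at a time: inside $M_i$ the left distributive law of a nearring module gives $u_ir_i-u_is_i=u_i(r_i-s_i)$, and since $(M,+)$ is abelian — it is a direct sum of copies of the abelian group $(R,+)$ — the equality becomes $\sum_{i\in J}u_i(r_i-s_i)=0$ with $u_i(r_i-s_i)\in M_i$. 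The uniqueness of the representation of an element of a direct sum as a finite sum of elements from the $M_i$ then forces $u_i(r_i-s_i)=0$ for each $i$, and injectivity of $r\mapsto u_ir$ gives $r_i=s_i$. Hence $X$ is a basis.

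I expect the only real subtlety to be bookkeeping rather than a genuine obstacle. Because a nearring module carries no distributive law over its $R$-factor in general, one cannot simply ``collect coefficients'' of a relation $\sum u_i r_i=\sum u_i s_i$ globally; the relation must first be reduced to the form $\sum u_i(r_i-s_i)=0$ coordinatewise, where the available module axiom $m(r_1+r_2)=mr_1+mr_2$ applies within a single $M_i$, and only afterwards may one invoke directness of the sum. Everything else rests on two facts already established — that each irreducible summand is isomorphic to $R_R$, and the elementary structure of internal direct sums of submodules — so no new machinery is needed; the trivial case $M_R=\{0\}$ is excluded by the convention that a basis is non-empty.
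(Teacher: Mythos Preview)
Your argument is correct. The paper does not supply its own proof of this statement --- it is quoted as a preliminary result from Beidleman's thesis --- so there is no in-paper proof to compare against directly. That said, the construction you use is exactly the one the paper employs in the proof of Theorem~\ref{thm1} (and in the remark after it for the infinite-dimensional case): from $M_R=\bigoplus_{i\in I}M_i$ one selects a nonzero generator of each $M_i$ and declares the resulting set a basis. The only difference is cosmetic: the paper invokes Corollary~\ref{cor} (irreducibility of a unitary module gives $mR=M$ for every nonzero $m$) to get $m_iR=M_i$, while you pass through the isomorphism $M_i\cong R_R$ and set $u_i=\varphi_i^{-1}(1)$. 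Your route has the small advantage that injectivity of $r\mapsto u_ir$ is immediate, which makes the uniqueness half of the basis check cleaner; the paper simply asserts that the resulting set ``is not difficult to see'' is a basis without writing out the verification you give.
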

Finally we have,
\begin{defn}(\cite{beidleman1966near})
If $M_R$ is a near-vector space over $R,$ then the cardinality of any basis is called the dimension of $M_R$ and is denoted by $\dim M_R$.
\end{defn}

%
%
%
%
%
%

\section{Finite dimensional  Beidleman near-vector spaces}
In \cite{van1992matrix} van der Walt characterized finite dimensional Andr\'e near-vector spaces. In this section we do the same for  finite dimensional Beidleman near-vector spaces. We will see that finite dimensional Beidleman near-vector spaces are closest (in terms of structure) to traditional finite dimensional vector spaces. Let $r_1, \ldots,r_n \in R.$ We use $(r_1, \ldots,r_n )^T$ to denote  the transpose of the row vector $(r_1, \ldots,r_n ).$
\begin{thm}

Let $R$ be a (left) nearfield and $M_R$  be a right nearring module. $M_R$ is a finite dimensional near-vector space if and only if $M_R \cong R^n$ for some   positive integer $n = \dim M_R.$
\label{thm1}
\end{thm}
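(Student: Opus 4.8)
The plan is to prove the two implications separately: the reverse direction is essentially a verification, while the forward direction carries the content and rests on the distribution law across direct sums (Lemma \ref{lemm}). For ($\Leftarrow$), give $R^n$ componentwise addition and the action $(r_1,\dots,r_n)\cdot r=(r_1r,\dots,r_nr)$; then $m(r_1+r_2)=mr_1+mr_2$ is the left distributive law of $R$ applied coordinatewise and $m(r_1r_2)=(mr_1)r_2$ is associativity of $\cdot$, so $R^n$ is a right $R$-module, with abelian additive group by Theorem \ref{tt}. Writing $R_i=\{(0,\dots,0,r,0,\dots,0):r\in R\}$ for the $i$th coordinate copy, each $R_i$ is a submodule (normality is automatic, and $(m+n)r-mr\in R_i$ by a coordinatewise check) with $R_i\cong R_R$, which is irreducible by the Remark following Theorem \ref{irre}. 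Hence $R^n=\bigoplus_{i=1}^n R_i$ is strictly semi-simple, i.e.\ a Beidleman near-vector space, and the standard vectors $e_1,\dots,e_n$ form a basis since $(r_1,\dots,r_n)=\sum_i e_ir_i$ with the $r_i$ uniquely determined; thus $\dim R^n=n$.

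For ($\Rightarrow$), let $M_R$ be a finite dimensional near-vector space, so $M_R=\bigoplus_{i\in I}M_i$ with each $M_i$ an irreducible submodule, and we may assume every $M_i\neq\{0\}$. By the Lemma stating that an irreducible module over a nearfield is isomorphic to $R_R$, there are $R$-module isomorphisms $\psi_i\colon M_i\to R_R$; put $x_i=\psi_i^{-1}(1)$. The key observation is that the decomposition map $M_R\to\bigoplus_{i\in I}M_i$, $m=\sum_i m_i\mapsto(m_i)_{i\in I}$, is $R$-linear: additivity is immediate, and $R$-linearity is precisely Lemma \ref{lemm}, since $mr=(\sum_i m_i)r=\sum_i(m_ir)$ with $m_ir\in M_i$ (a submodule is an $R$-subgroup by Proposition \ref{pro}). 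Composing with $\bigoplus_i\psi_i$ yields an $R$-module isomorphism $\phi\colon M_R\to\bigoplus_{i\in I}R_R$ carrying $x_i$ to the $i$th standard vector; since an isomorphism sends a basis to a basis, $\{x_i:i\in I\}$ is a basis of $M_R$.

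It remains to see that $I$ is finite with $|I|=\dim M_R$. I would fix a finite basis $\{b_1,\dots,b_n\}$ of $M_R$ (so $n=\dim M_R$), write each $b_j=\sum_i(b_j)_i$ with $(b_j)_i\in M_i$, and let $S$ be the finite set of indices $i$ for which some $(b_j)_i\neq0$. For an arbitrary $m=\sum_j b_jr_j$, Lemma \ref{lemm} shows the $i$th component of $m$ equals $\sum_j(b_j)_ir_j$, which vanishes whenever $i\notin S$; hence $M_i=\{0\}$ for $i\notin S$, forcing $I=S$, so $I$ is finite. Then $\bigoplus_{i\in I}R_R=R^{|I|}$, and since $\{x_i:i\in I\}$ is a basis we get $|I|=\dim M_R=n$ (using that all bases share a common cardinality, which is what makes $\dim$ well defined), so $M_R\cong R^n$. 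The step I expect to be the main obstacle is exactly this passage between the abstract summand decomposition and a finite basis: without Lemma \ref{lemm} neither the $R$-linearity of $\phi$ nor the coordinatewise vanishing argument pinning down $I$ is available, whereas the remaining verifications — the module and submodule axioms for $R^n$, irreducibility of $R_R$, and the basis property — are routine.
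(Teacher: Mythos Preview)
Your argument is correct and rests on the same key ingredient as the paper's---Lemma \ref{lemm}---to obtain $R$-linearity of the isomorphism between $M_R$ and a direct sum of copies of $R_R$. The paper constructs the map in the opposite direction, $\phi\colon R^n\to M$, $(r_1,\ldots,r_n)^T\mapsto\sum_i m_ir_i$, invoking Corollary \ref{cor} (each $M_i=m_iR$ for some nonzero $m_i$) in place of the lemma $M_i\cong R_R$ that you cite; these devices are interchangeable and the two constructions are inverses of one another. Your write-up is in fact more complete than the paper's in two respects: you prove the converse $(\Leftarrow)$ explicitly, which the paper omits, and you justify that the index set $I$ in $\bigoplus_{i\in I}M_i$ must be finite of size $\dim M_R$ by bounding it against a fixed finite basis, whereas the paper simply writes $M_R=\bigoplus_{i=1}^n M_i$ from the outset without arguing that the number of irreducible summands equals the dimension.
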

\begin{proof}
Let us consider the $R$-modules $M_R$ and $R^n_R$ with the action given as 
$ M \times R \to M$ such that $(m,r) \mapsto mr$. Since $M_R$ is a Beidleman near-vector space,  $M_R = \bigoplus _{i =1 }^n M_i$ where $M_i$ for $ i \in \lbrace 1,\ldots,n\rbrace$ are non-zero irreducible $R$-submodules of $M_R.$ Then by  Corollary \ref{cor}, there exists $0 \neq m_i \in M_i$ such that $m_iR=M_i$ for $ i \in \lbrace 1,\ldots,n\rbrace$.  Hence $M_R= \bigoplus_{i=1}^n m_iR.$ Then it is not difficult to see that $ B=\lbrace m_1, \ldots, m_n \rbrace $ is a basis of $M_R.$
Let us consider the map
\begin{align*}
\phi :& \thickspace R^n \to M \\
& (r_1, \ldots,r_n)^T \mapsto  \sum_{i=1}^{n} m_ir_i.
\end{align*}
Let $(r_1, \ldots,r_n)^T, (r'_1, \ldots,r'_n)^T \in R^n.$
We have
\begin{align*}
\big ((r_1, \ldots,r_n)^T + (r'_1, \ldots,r'_n)^T  \big )   \phi & = \sum_{i=1}^{n} m_i(r_i+ r'_i) 
=\sum_{i=1}^{n} (m_ir_i+ m_ir'_i) \\
&=\sum_{i=1}^{n} m_ir_i+ \sum _{i=1}^{n} m_ir'_i  \\
& = \big ((r_1, \ldots,r_n)^T \big ) \phi + \big ( (r'_1, \ldots,r'_n)^T  \big ) \phi.
\end{align*}
Let $ r \in R$  and  $ (r_1, \ldots,r_n)^T \in R^n.$ Using   Lemma \ref{lemm}, we obtain

\begin{align*}
\big (  (r_1, \ldots,r_n)^T r \big )   \phi & = \sum_{i=1}^{n} (m_ir_i)r  =  \big ( \sum_{i=1}^{n} m_ir_i \big ) r = \big (  (r_1, \ldots,r_n)^T  \big )   \phi r.
\end{align*}
Since $B$ is a basis of $M_R$ then $ \big ( (r_1, \ldots,r_n)^T \big )   \phi =0 \Rightarrow \sum_{i=1}^{n} m_ir_i =0 \Rightarrow r_1=r_2= \ldots=r_n=0$.  We deduce that,
\begin{align*}
Ker \phi  =  \big \lbrace (r_1, \ldots,r_n)^T  \in R^n  \vert ((r_1, \ldots,r_n)^T) \phi = 0   \rbrace 
=  \big \lbrace (0, \ldots,0)^T \big \rbrace.
\end{align*}
It follows that $\phi$ is injective.

 Let $m \in M_R.$ Since $B$ is a basis of $M_R$, there exists $r_1, \ldots, r_n \in R$ such that $m = \sum_{i=1}^{n} m_ir_i=\big ( (r_1, \ldots,r_n)^T \big )   \phi  $. It follows that $\phi $ is surjective. Hence $\phi$ is bijective map. 

\end{proof}

\begin{rem}Let $R$ be a nearfield.
\begin{itemize}
\item By van der Walt's theorem \cite{van1992matrix}, $(R^n,R) $ is an Andr\'e near-vector space with the scalar multiplication defined by
\begin{align*}
\alpha  (x_1, \ldots,x_n) = \big ( \psi_1( \alpha) x_1 , \ldots, \psi_n( \alpha) x_n \big ),
\end{align*} for all $ \alpha \in R$ and $ (x_1, \ldots,x_n) \in R^n$
where the  $\psi_i$ for $i  \in \lbrace 1,\ldots,n\rbrace$ are multiplicative  automorphisms of $R^*=R \setminus \{0 \}$. Note that the action of the scalars on the vectors are on the left, whereas for Beidleman the action of the scalars on the vectors are on right. Also by van der Walt's construction theorem \cite{van1992matrix}  we can take different nearfields to construct finite dimensional Andr\'e near-vector spaces, as long as the nearfields are multiplicatively isomorphic. To construct Beidleman near-vector spaces, $n$ copies of the same nearfield are used in the construction and we can use the automorphism of nearfields to define the scalar multiplication. In later, we will only focus on the identity automorphism of nearfield as the scalar multiplication.
\item 
If $R$ is a field then $(R^n,R)$  is both an Andr\'e and Beidleman near-vector space, and both coincide to a vector space. Here we are taking all the $\psi_i=id$ for $i \in \lbrace 1,\ldots,n\rbrace$. 
\end{itemize}
\end{rem}
\begin{rem} Let $I$ be an index set and $M_R$ a Beidleman near-vector space. Then $M_R= \bigoplus_{i \in I}M_i,$ where the $M_i, i \in I,$ are irreducible submodules of $M_R.$ By Corollary \ref{cor} there exists $0 \neq m \in M_i$ such that $ M_R= \bigoplus_{i \in I}m_iR.$ Hence $ \{ m_i | \thickspace i \in I \}$ is a basis for $M_R$. If $I$ is infinite then $M_R$ has infinite dimension and we can use the same procedure in the proof of Theorem \ref{thm1} to  show that $M \cong R^{\dim M_R}.$
\end{rem}

\section{The subspace structure of Beidleman near-vector spaces}

	In this section we investigate some properties of the subspace structure of  Beidleman near-vector spaces. We find that with regard to  subspace structure, Andr\'e near-vector spaces are closest to traditional vector spaces.

As with  Andr\'e near-vector spaces, we deduce the following.
\begin{lem}
Let $M_R$ be a  Beidleman near-vector space. Then $(M,+)$ is abelian.
\label{r}
\end{lem}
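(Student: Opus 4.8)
The plan is to derive abelianness of $(M,+)$ directly from the structure of a Beidleman near-vector space as a direct sum of irreducible submodules, combined with the fact (Theorem~\ref{tt}) that the additive group of a nearfield is abelian. First I would recall that since $M_R$ is strictly semi-simple, $M_R = \bigoplus_{i \in I} M_i$ with each $M_i$ an irreducible $R$-submodule, and by the lemma preceding the basis discussion each $M_i \cong R_R$ as an $R$-module. Hence each $(M_i,+)$ is isomorphic to $(R,+)$, which is abelian by Theorem~\ref{tt}.

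Next I would use Proposition~\ref{rp}: since $M_R = \bigoplus_{i \in I} M_i$, the elements of any two distinct submodules $M_i$ and $M_j$ permute. Combined with the fact that each individual summand $(M_i,+)$ is abelian, this shows that any two elements of $\bigcup_i M_i$ commute. Finally, an arbitrary element $m \in M$ has a (finite) representation $m = \sum_{i} m_i$ with $m_i \in M_i$; given $m, m' \in M$ with $m = \sum_i m_i$ and $m' = \sum_j m'_j$, I would expand $m + m'$ and repeatedly swap adjacent terms using the two commutation facts (within a summand, and across distinct summands) to rearrange it into $m' + m$. This is the standard argument that a group generated by pairwise-commuting abelian subgroups is abelian.

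I do not expect a serious obstacle here; the only point requiring a little care is bookkeeping with the finite sums — making sure that when $m$ and $m'$ have overlapping index supports, the within-summand abelianness of $(M_i,+)$ is invoked, and when the supports are disjoint (or for the cross terms), Proposition~\ref{rp} is invoked. One could streamline this by noting that $(M,+) \cong \bigoplus_{i \in I}(M_i,+)$ as groups, a direct sum of abelian groups, hence abelian; but since the paper's direct-sum definition is phrased in terms of normal subgroups of $(M,+)$, I would spell out the permutation argument explicitly rather than lean on an unstated isomorphism. The whole proof should be three or four lines.

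\begin{proof}
Since $M_R$ is a Beidleman near-vector space, $M_R = \bigoplus_{i \in I} M_i$ where each $M_i$ is an irreducible $R$-submodule, and by the lemma above each $M_i \cong R_R$. In particular $(M_i,+) \cong (R,+)$, which is abelian by Theorem~\ref{tt}. By Proposition~\ref{rp}, the elements of any two distinct submodules $M_i$, $M_j$ permute. Now let $m, m' \in M$ and write $m = \sum_{i \in I} m_i$, $m' = \sum_{i \in I} m'_i$ as finite sums with $m_i, m'_i \in M_i$. Using that each $(M_i,+)$ is abelian and that elements of distinct $M_i$ commute, we may rearrange
\begin{align*}
m + m' = \sum_{i \in I} m_i + \sum_{i \in I} m'_i = \sum_{i \in I}(m_i + m'_i) = \sum_{i \in I}(m'_i + m_i) = \sum_{i \in I} m'_i + \sum_{i \in I} m_i = m' + m.
\end{align*}
Hence $(M,+)$ is abelian.
\end{proof}
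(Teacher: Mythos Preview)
Your proof is correct. Both arguments rest on the same two ingredients --- Theorem~\ref{tt} (the additive group of a nearfield is abelian) and the direct-sum structure of $M_R$ --- but the routes differ. The paper invokes Theorem~\ref{thm1} (and its infinite-dimensional analogue) to get an isomorphism $M_R \cong R^{\dim M_R}$, and then reads off abelianness of $(M,+)$ from abelianness of $(R^{\dim M_R},+)$. You instead bypass Theorem~\ref{thm1} entirely: you use the lemma $M_i \cong R_R$ to get each summand abelian, invoke Proposition~\ref{rp} for commutation across distinct summands, and assemble the result by hand. Your approach is more self-contained (it does not depend on the characterisation theorem) and handles the finite- and infinite-dimensional cases uniformly without a case split; the paper's version is shorter on the page because the work has already been done inside Theorem~\ref{thm1}.
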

\begin{proof}
If $M_R$ is of finite dimension then by Theorem \ref{thm1},  $M_R \cong R^n$ where $\dim M_R=n.$ Since $R$ is a nearfield, by Theorem \ref{tt} $(R,+)$ is abelian. It follows that $(R^n,+)$ is abelian. Hence $(M,+)$ is abelian. If $M$ is of infinite dimension, $M \cong R^{dim M}.$ So $(M,+)$ is abelian.
\end{proof}

Let $M_R$ be a Beidleman near-vector space. $N$ is a subspace of $M_R$ if $\emptyset \neq N \subset M$ and $N$ is also a Beidleman near-vector space.

From Lemma \ref{r}, we  can give an equivalent definition of a subspace as follows.
\begin{defn}Let $M_R$ be a Beidleman near-vector space. 
$\emptyset \neq N \subset M$ is a subspace of $M_R$ if 
\begin{itemize}
\item[(1)] $(N,+)$ is a subgroup of $(M_R,+)$,
\item[(2)] $(m+n)r-mr \in N$ for all $m \in M, n \in N$ and $r \in R.$
\end{itemize}
\end{defn}
\begin{rem}
According to \cite{beidleman1966near},  $(N,+)$ should be a normal subgroup of $(M,+)$. But since $(M,+)$ is abelian, we do not need the normality again. 
\end{rem}

\begin{lem}Let $M_R$ be a Beidleman near-vector space.
Let $M_1$ and $M_2$ be subspaces of $M$. Then $ M_1 \cap M_2$ is also subspace of $M$.
\end{lem}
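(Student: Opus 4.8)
The plan is to verify the two defining conditions of a subspace directly for $N := M_1 \cap M_2$, using that each $M_i$ already satisfies them. First I would note that $N$ is nonempty since $0 \in M_1$ and $0 \in M_2$, so $0 \in N$, and $N \subset M$ is immediate. For condition (1), I would take $n, n' \in N$; then $n, n' \in M_1$, so $n - n' \in M_1$ because $(M_1,+)$ is a subgroup of $(M,+)$, and likewise $n - n' \in M_2$, hence $n - n' \in N$. By the subgroup criterion (using that $N \ni 0$), $(N,+)$ is a subgroup of $(M,+)$.

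For condition (2), I would take arbitrary $m \in M$, $n \in N$, $r \in R$. Since $n \in M_1$ and $M_1$ is a subspace, $(m+n)r - mr \in M_1$; since $n \in M_2$ and $M_2$ is a subspace, $(m+n)r - mr \in M_2$. Therefore $(m+n)r - mr \in M_1 \cap M_2 = N$. This establishes both conditions, so $N$ is a subspace of $M$ by the equivalent definition given just before the statement.

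One small point worth addressing explicitly is that a subspace is required to be a \emph{proper} nonempty subset ($\emptyset \neq N \subset M$ in the paper's convention), so strictly speaking one should remark that if $M_1 \cap M_2 = M$ then trivially $M_1 = M_2 = M$ and the intersection is $M$ itself — depending on how pedantic one wants to be about the strict-subset convention, the statement is best read as "$M_1 \cap M_2$ is a subspace or equals $M$," or one simply works with the two structural conditions (1)–(2) which are the real content. I would phrase the proof around conditions (1)–(2) and not belabour the properness issue.

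There is essentially no obstacle here: the argument is the standard "intersection of substructures is a substructure" pattern, and every ingredient (the subgroup criterion, the closure condition (2) for each $M_i$) is available. The only thing to be careful about is that the near-vector space axioms are not fully linear, so one must quote condition (2) in exactly the displacement form $(m+n)r - mr \in N$ rather than naively writing "$nr \in N$"; but since that closure property holds verbatim in each $M_i$ and membership in an intersection is just the conjunction of the two memberships, it transfers with no extra work.
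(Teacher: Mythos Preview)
Your proof is correct and follows essentially the same route as the paper: verify that $M_1\cap M_2$ is a subgroup (the paper simply says this is ``not difficult to see'') and then check condition~(2) by observing that $(m+n)r-mr$ lies in each $M_i$ separately, hence in their intersection. Your added remarks on nonemptiness and the properness convention are harmless elaborations but not part of the paper's argument.
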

\begin{proof}
It is not difficult to see that $\big (M_1 \cap M_2,+  \big ) $ is a subgroup of $(M,+)$. Let $ m \in M, n \in  M_1 \cap M_2$ and $r \in R.$ Then  $(m+n)r-mr \in M_1$ and $(m+n)r-mr \in M_2$. Thus $(m+n)r-mr \in M_1 \cap M_2.$
\end{proof}
By Lemma \ref{r}, we now deduce the following.
\begin{lem}Let $M_R$ be a Beidleman near-vector space.
Let $M_1$ and $M_2$ be subspaces of $M$. Then $ M_1+ M_2$ is also a subspace of $M$ where $M_1+M_2= \lbrace m_1+m_2 \vert m_1 \in M_1, m_2 \in M_2 \rbrace.$
\end{lem}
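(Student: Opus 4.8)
The plan is to verify the two defining conditions of a subspace directly for $M_1 + M_2$, using the corresponding conditions for $M_1$ and $M_2$ together with the fact (Lemma \ref{r}) that $(M,+)$ is abelian. First I would check that $(M_1 + M_2, +)$ is a subgroup of $(M,+)$: given $m_1 + m_2$ and $m_1' + m_2'$ with $m_i, m_i' \in M_i$, commutativity lets me rearrange $(m_1+m_2) - (m_1'+m_2') = (m_1 - m_1') + (m_2 - m_2')$, which lies in $M_1 + M_2$ since each $M_i$ is a subgroup; this gives closure under subtraction, hence the subgroup property, and $0 = 0 + 0 \in M_1 + M_2$ so it is non-empty.

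The second step is condition (2): for $m \in M$, $n = n_1 + n_2 \in M_1 + M_2$ (with $n_i \in M_i$), and $r \in R$, I must show $(m+n)r - mr \in M_1 + M_2$. The natural idea is to insert the partial sum $m + n_1$ as an intermediate point and telescope:
\begin{align*}
(m + n_1 + n_2)r - mr = \big[(m + n_1 + n_2)r - (m + n_1)r\big] + \big[(m + n_1)r - mr\big].
\end{align*}
The second bracket is $(m + n_1)r - mr \in M_1$ by condition (2) applied to the subspace $M_1$ (with the element $m \in M$). The first bracket is $\big((m+n_1) + n_2\big)r - (m+n_1)r$, which is of the form $(m' + n_2)r - m'r$ with $m' = m + n_1 \in M$ and $n_2 \in M_2$, hence lies in $M_2$ by condition (2) applied to $M_2$. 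Therefore $(m+n)r - mr \in M_1 + M_2$, as required.

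I do not expect a serious obstacle here; the only subtlety is that condition (2) is inherently non-symmetric in $M$ versus the subspace, so the telescoping must peel off the $M_2$-part first (wrapping the already-shifted base point $m + n_1$) and the $M_1$-part second — getting the order wrong would leave a term that is not manifestly in either summand. Commutativity of $(M,+)$ is what makes the rearrangement in the subgroup step clean, and it is already available by Lemma \ref{r}. Once both conditions are checked, $M_1 + M_2$ is a subspace by the definition preceding this lemma.
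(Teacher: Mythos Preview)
Your proof is correct and follows essentially the same approach as the paper: the subgroup check is dispatched using abelianness of $(M,+)$, and condition~(2) is verified by exactly the same telescoping trick, inserting $(m+n_1)r$ to split $(m+n_1+n_2)r - mr$ into a term in $M_2$ and a term in $M_1$. The paper's write-up is nearly identical, differing only in notation.
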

\begin{proof}~
It is straightforward to check that $ \big ( M_1+M_2  ,+\big )$ is a subgroup of $(M,+)$. Furthermore, let $m \in M ,l \in M_1+M_2$ and $r \in R.$ Then $l=m_1+m_2$ for some $m_1 \in M_1$ and $m_2 \in M_2.$ We have
\begin{align*}
(m+l)r-mr & = (m+m_1+m_2)r-mr  \\
&= \big ( (m+m_1)+ m_2 \big ) r -mr \\
&=  \big ( (m+m_1)+ m_2 \big )  r -  (m+m_1)r + (m+m_1)r -mr.
\end{align*}
Since $\big ( (m+m_1)+ m_2 \big )  r -  (m+m_1)r \in M_2, \thickspace$  $(m+m_1)r -mr \in M_1$ and $(M,+)$ is abelian it follows that $(m+l)r-mr \in M_1+M_2.$ 
\end{proof}

For vector spaces and  Andr\'e near-vector spaces, a non-empty subset is subspace if and only if it is closed under addition and scalar multiplication \cite{howell2015subspaces}. For a Beidleman near-vector spaces we only have
\begin{lem}
If $ \emptyset \neq  N_R$ is a subspace of $M_R$ then $N_R$ is closed under addition and scalar multiplication.
\end{lem}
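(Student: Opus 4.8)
The plan is to unwind the definition of a subspace and use the second axiom to extract both closure properties. The statement is essentially immediate once one makes the right substitutions, so the \say{hard part} is really just choosing convenient values of $m$ in axiom (2).

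First I would recall that, by definition, $N_R$ being a subspace of $M_R$ means that $(N,+)$ is a subgroup of $(M,+)$ and that $(m+n)r - mr \in N$ for all $m \in M$, $n \in N$, $r \in R$. Closure under addition is then free: since $(N,+)$ is a subgroup, $n_1 + n_2 \in N$ whenever $n_1, n_2 \in N$. For closure under scalar multiplication, take $n \in N$ and $r \in R$, and apply axiom (2) with the choice $m = 0 \in M$. This gives $(0 + n)r - 0\cdot r = nr - 0 \in N$. It remains only to note that $0 \cdot r = 0$ in the nearring module $M_R$: indeed $0\cdot r = (0+0)r = 0\cdot r + 0\cdot r$, so cancelling in the group $(M,+)$ yields $0\cdot r = 0$. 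Hence $nr \in N$, which is closure under scalar multiplication.

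The one small subtlety worth flagging is that the definition of subspace as stated uses $M$ (all of $M$) in axiom (2), so $m = 0$ is a legitimate choice; there is nothing to check about $0$ lying in $N$ for this argument, only that $0 \in M$. I expect no genuine obstacle here: the lemma is the \say{easy direction} of the subspace criterion, and the content of the surrounding discussion is precisely that the converse fails for Beidleman near-vector spaces, unlike the Andr\'e case. So the proof is three lines: subgroup gives additive closure; $m=0$ in axiom (2) together with $0\cdot r = 0$ gives multiplicative closure.

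\begin{proof}
Suppose $\emptyset \neq N_R$ is a subspace of $M_R$. Since $(N,+)$ is a subgroup of $(M,+)$, for all $n_1, n_2 \in N$ we have $n_1 + n_2 \in N$, so $N$ is closed under addition. Now let $n \in N$ and $r \in R$. First note that $0\cdot r = (0+0)r = 0\cdot r + 0\cdot r$, and cancelling in $(M,+)$ gives $0 \cdot r = 0$. Applying axiom (2) of the definition of a subspace with $m = 0 \in M$, we obtain $(0 + n)r - 0\cdot r = nr - 0 = nr \in N$. Hence $N$ is closed under scalar multiplication.
\end{proof}
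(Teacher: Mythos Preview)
Your proof is correct and follows exactly the same approach as the paper: closure under addition comes from the subgroup property, and closure under scalar multiplication follows by specializing axiom (2) to $m=0$. You are slightly more careful than the paper in that you explicitly verify $0\cdot r = 0$ in $M_R$, whereas the paper uses this silently.
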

\begin{proof}
If $ N_R$ is a subspace of $M_R$, then $\big ( N_R,+ \big )$ is a subgroup of $\big ( M_R,+ \big )$ and for all $m \in M, r \in R, n \in N$ we have $(m+n)r-mr \in N.$ In particular for $m=0$, we obtain that $nr \in R.$ 
\end{proof}
Note that every subspace is a $R$-subgroup but  being closed under addition and scalar multiplication does not in general give a subspace.  We now provide a counter example.
\begin{exa}
Let $R=DN(3,2)$ be the finite Dickson nearfield that arises from the Dickson pair $(3,2)$. Then $(R^2,R)$ is a Beidleman near-vector space.
Let us consider
\begin{align*}
T=  \big \{ \Vector{1}{x}r | r \in R  \big \}= \big <  \Vector{1}{x} \big >. 
\end{align*}
Note that $T$ is a $R$-subgroup of $R^2$ but not a subspace of $R^2$. Indeed by Example \ref{ex2}, 
Let $\Vector{1}{x+1} \in R^2, \Vector{1}{x} \in T $ and $x \in R.$ We have
\begin{align*}
\left(  \left( {\begin{array}{cc}
   1  \\
  x+1
  \end{array} } \right)  + \left( {\begin{array}{cc}
   1  \\
  x
  \end{array} } \right)  \right) \circ  x  - \left( {\begin{array}{cc}
  1 \\
   x+1 
  \end{array} } \right) \circ x &= \left( {\begin{array}{cc}
   2x  \\
  (2x+1) \circ x -(x+1) \circ x
  \end{array} } \right) \\
  &=\left( {\begin{array}{cc}
   2x \\
  -1
  \end{array} } \right)  \notin T.
\end{align*}
Hence $T$ is not a subspace of $R^2$.

\label{example}
\end{exa}

%
%
%
%

\section{Classification of the $R$-subgroups of $R^n$}

\blfootnote{The authors wish to express their appreciations to Georg Anegg for his jointly work.}

Let $(R,+,\circ,0,1)$ be a Dickson nearfield for the Dickson pair $(q,m)$ with $m>1$. We know that the distributive elements of $R$ form a subfield of size $q$. Also, not all elements are distributive, thus there are elements $ \lambda \in R$ such that $(\alpha+\beta)\circ \lambda \neq \alpha \circ \lambda + \beta \circ \lambda$ for all $\alpha, \beta \in R$. We will use  $R_d$ to denote the set of all distributive elements of $R$, i.e.,
\begin{equation*}
R_d = \lbrace z \in R | \thickspace (x+y)\circ z=x \circ z+y \circ z \thickspace \mbox{for all x,y} \in R \rbrace,
\end{equation*} 
where from now on, we shall simply use concatenation instead of $\circ$.

Consider the $R$-module $R^n$ (for some fixed $n\in \mathbb{N}$) with componentwise addition and  scalar multiplication $R^n \times R \to R^n$ given by $(x_1,x_2,\ldots,x_n) r= (x_1r,x_2r,\ldots,x_nr)$ for $(x_1,x_2,\ldots,x_n) \in R^n$ and $r \in R$. A $R$-subgroup $S$ of $R^n$ is a subgroup of $(R^n,+)$ that is closed under scalar multiplication.
Let $v_1,v_2,\ldots,v_k \in R^n$ be a finite number of vectors. The smallest subspace of $R^n$ containing $ \{ v_1,v_2,\ldots,v_k\}$ is called the \textit{span} of $v_1,v_2,\ldots,v_k$ and is denoted by $span(v_1,\ldots,v_k)$.

In analogy to span, we introduce the notion of \textit{gen}.
\begin{defn}
Let $v_1,v_2,\ldots,v_k \in R^n$ (for some $k\in \mathbb{N}$) be a finite number of vectors. We define $gen(v_1,\ldots,v_k)$ to be the smallest $R$-subgroup of $R^n$ containing $\{ v_1,v_2,\ldots,v_k \}$.
\end{defn}

Our first aim is to find an explicit description of $gen(v_1,\ldots,v_k)$.

Let $LC_0(v_1,v_2,\ldots,v_k):=\{ v_1,v_2,...,v_k\}$ and for $n\geq0$, let $LC_{n+1}$ be the set of all linear combinations of elements in $LC_n(v_1,v_2,\ldots,v_k)$, i.e.
\begin{equation*}
	LC_{n+1}(v_1,v_2,\ldots,v_k)=\left \{ \sum_{w \in LC_n} w \lambda_w |\lambda_w \in R  \right\}.
\end{equation*}
We will  denote $LC_n(v_1,v_2,\ldots,v_k)$ by $LC_n$ for short when there is no ambiguity with regard to the initial set of vectors. 
\begin{thm}Let $v_1,v_2,\ldots,v_k \in R^n$.
	We have 
	\begin{equation*}
	gen(v_1,\ldots,v_k)=\bigcup_{i=0}^\infty LC_i.
	\end{equation*}
	\label{th1}
\end{thm}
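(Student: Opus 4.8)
The plan is to prove the two inclusions separately, after recording the obvious monotonicity facts. First I would observe that $LC_n \subseteq LC_{n+1}$ for all $n \geq 0$: indeed each $w \in LC_n$ can be written as the trivial linear combination $w\cdot 1$ (here $1$ is the identity of the nearfield $R$), so $w \in LC_{n+1}$. Consequently the sets $LC_0 \subseteq LC_1 \subseteq LC_2 \subseteq \cdots$ form an increasing chain and $U:=\bigcup_{i=0}^\infty LC_i$ is a directed union. I would also note at the outset that since $gen(v_1,\ldots,v_k)$ is by definition the smallest $R$-subgroup containing $\{v_1,\ldots,v_k\}$, to prove $U = gen(v_1,\ldots,v_k)$ it suffices to show (a) $U$ is an $R$-subgroup of $R^n$ containing all the $v_i$, which gives $gen(v_1,\ldots,v_k) \subseteq U$, and (b) every $R$-subgroup $S$ containing $\{v_1,\ldots,v_k\}$ contains $U$, which gives the reverse inclusion.

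For inclusion (b), I would argue by induction on $i$ that $LC_i \subseteq S$ for every $R$-subgroup $S \supseteq \{v_1,\ldots,v_k\}$. The base case $LC_0 = \{v_1,\ldots,v_k\} \subseteq S$ is immediate. For the inductive step, assume $LC_n \subseteq S$; a typical element of $LC_{n+1}$ has the form $\sum_{w \in LC_n} w\lambda_w$ with only finitely many $\lambda_w$ nonzero. Each summand $w\lambda_w$ lies in $S$ because $w \in S$ (inductive hypothesis) and $S$ is closed under scalar multiplication; then the finite sum lies in $S$ because $(S,+)$ is a subgroup of $(R^n,+)$. Hence $LC_{n+1} \subseteq S$, so by induction $LC_i \subseteq S$ for all $i$ and therefore $U = \bigcup_i LC_i \subseteq S$. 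Taking $S = gen(v_1,\ldots,v_k)$ gives $U \subseteq gen(v_1,\ldots,v_k)$.

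For inclusion (a), I must check that $U$ is itself an $R$-subgroup. It contains each $v_i$ since $v_i \in LC_0 \subseteq U$. Closure under scalar multiplication: if $u \in U$ then $u \in LC_n$ for some $n$, and for any $r \in R$ the element $ur$ is a (one-term) linear combination of elements of $LC_n$, hence $ur \in LC_{n+1} \subseteq U$. Closure of $(U,+)$ as a subgroup: given $u_1, u_2 \in U$, by the chain property both lie in a common $LC_n$ (take the larger of the two indices); then $u_1 + u_2$, and more to the point $u_1 - u_2$, can be written as a linear combination $u_1\cdot 1 + u_2\cdot(-1)$ of elements of $LC_n$ — here I use that $-u_2 = u_2(-1)$ in the $R$-module $R^n$, which holds since scalar multiplication distributes over the abelian group $(R,+)$ — so $u_1 - u_2 \in LC_{n+1} \subseteq U$. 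Together with $0 \in U$ this shows $(U,+) \leq (R^n,+)$. Thus $U$ is an $R$-subgroup containing $\{v_1,\ldots,v_k\}$, so $gen(v_1,\ldots,v_k) \subseteq U$, and combining with (b) finishes the proof.

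The only genuinely delicate point — and the one I would be most careful about — is the verification in (a) that $U$ is closed under the group operation and under scalar multiplication using only \emph{one} extra level $LC_{n+1}$; this relies crucially on the chain $LC_n \subseteq LC_{n+1}$, so that two elements coming from different levels can be pushed into a common level before forming their combination. If that monotonicity failed, $U$ need not be closed and the theorem would be false, so establishing $LC_n \subseteq LC_{n+1}$ cleanly at the start (via the trivial combination $w = w\cdot 1$, which is where the multiplicative identity of the nearfield is used) is the linchpin of the argument.
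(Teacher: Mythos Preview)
Your proof is correct and follows essentially the same strategy as the paper: show that $\bigcup_i LC_i$ is an $R$-subgroup containing the $v_j$, and show by induction that every $R$-subgroup containing the $v_j$ contains each $LC_i$. Your explicit verification of the chain condition $LC_n\subseteq LC_{n+1}$ via $w=w\cdot 1$ and your scalar-closure argument (placing $ur$ in $LC_{n+1}$ as a one-term combination) are slightly cleaner than the paper's corresponding steps, but the overall architecture is the same.
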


\begin{proof}
We need to show that $ \bigcup_{i=0}^\infty LC_i$ is a $R$-subgroup of $R^n$ and for any $R$-subgroup $S$ containing $v_1,v_2,...,v_k,$ we have $ \bigcup_{i=0}^\infty LC_i \subseteq S.$

The zero vector  $0 _{R^n}$ of $(R^n,+)$ is an element of $\bigcup_{i=0}^\infty LC_i.$ Let $x,y \in \bigcup_{i=0}^\infty LC_i.$ We distinguish two cases: 
\begin{itemize}
\item[Case 1:] $x,y \in LC_n$ for some $n \in \mathbb{N}$. So $x-y \in LC_{n+1}.$
\item[Case 2:] $x \in LC_k $ and $y \in LC_n$ for some $k < n.$ Since $k<n,$ we have $LC_k \subseteq LC_n$ by definition. Hence $x-y \in LC_n$.
So $x-y \in \bigcup_{i=0}^\infty LC_i.$
\end{itemize}
Therefore $ \big (\bigcup_{i=0}^\infty LC_i,+ \big )$ is a subgroup of $\big ( R^n,+ \big )$. 

Let $x \in \bigcup_{i=0}^\infty LC_i$ and $r \in R$. Then $x \in LC_n$ for some $ n \in \mathbb{N}$. For $n=0$, $x=v_i$ for some $i \in \{ 1, \ldots,k\}$, so $v_ir \in LC_1$ . For $ n \geq 1,$ we have $ x =\sum_{w \in LC_{n-1}} w \lambda_w,$ and thus $xr= \big ( \sum_{w \in LC_{n-1}} w \lambda_w \big )r= \sum_{w \in LC_{n-1}} w \lambda_w r  \in LC_{n}. $

It remains to show that for any $R$-subgroup $S$ containing $v_1,\ldots,v_k$ we have $\bigcup_{i=0}^\infty LC_i \subseteq S. $ It is sufficient to show that for all $ n \in \mathbb{N},$ $LC_n \subseteq S.$ We use induction on $n$. For $n=0$ we have $LC_0 \subseteq S.$ Assume that $LC_k \subseteq S$ for $ k \in \mathbb{N}$. Let $ x \in LC_{k+1}.$ Then  $ x =\sum_{w \in LC_k} w \lambda_w,$ where $\lambda_w \in R.$ But $w  \in  LC_k \subseteq S.$ So $w \lambda_w  \in S$ since $S$ is a $R$-subgroup (closed under addition and scalar multiplication). Therefore $\sum_{w \in LC_n} w \lambda_w \in S$. Hence $x \in S.$
\end{proof}

In the following propositions we give some basic properties of  \textit{gen}.

\begin{pro} Let $k \in \mathbb{N}$ and $T$ be a finite set of vectors in $R^n.$ We have,
\begin{align*}
LC_n \big ( LC_k (T) \big )= LC_{n+k}(T) \thickspace \mbox{for all} \thickspace n \in \mathbb{N}.
\end{align*}
\end{pro}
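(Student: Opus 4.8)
The plan is to prove the identity $LC_n(LC_k(T)) = LC_{n+k}(T)$ by induction on $n$, with $k$ fixed but arbitrary. The base case $n = 0$ is just the definition: $LC_0(LC_k(T)) = LC_k(T) = LC_{0+k}(T)$, since $LC_0$ of any finite set is that set itself. (One should first observe that $LC_k(T)$ is indeed a finite set, so that $LC_0$ applied to it makes sense; this follows because $R^n$ may be infinite but each $LC_j$ produces a subset of $R^n$, and we only ever form finitely-indexed linear combinations — actually $LC_j$ need not be finite for infinite $R$, so I would instead note that the operators $LC_m$ are defined on arbitrary subsets of $R^n$ and the proof goes through verbatim; alternatively restrict attention to the finite stages. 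I would phrase the statement and proof so this technicality does not bite.)

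For the inductive step, assume $LC_n(LC_k(T)) = LC_{n+k}(T)$. Then by the recursive definition applied to the set $LC_k(T)$ in place of the initial set of vectors,
\begin{align*}
LC_{n+1}(LC_k(T)) &= \left\{ \sum_{w \in LC_n(LC_k(T))} w\lambda_w \;\middle|\; \lambda_w \in R \right\} \\
&= \left\{ \sum_{w \in LC_{n+k}(T)} w\lambda_w \;\middle|\; \lambda_w \in R \right\} \\
&= LC_{(n+k)+1}(T) = LC_{(n+1)+k}(T),
\end{align*}
where the second equality is exactly the induction hypothesis (it identifies the index set of the sum) and the third is the definition of $LC_{\bullet+1}$ applied to $T$. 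This closes the induction.

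The only real subtlety — and the step I expect to require the most care in writing up — is making sure the definition of $LC_{m+1}$ is genuinely being applied to the correct "initial" set. The notation $LC_m(v_1,\ldots,v_k)$ hides the fact that $LC$ is an operator iterated from a starting set; so I would be explicit that $LC_{n+1}(\text{start})$ depends only on $LC_n(\text{start})$ via the displayed formula, and in particular $LC_{n+1}(LC_k(T))$ is obtained from $LC_n(LC_k(T))$ by that same formula. Once that is made precise, everything is a one-line substitution. A secondary point worth a remark is that the identity implies the chain $LC_0(T) \subseteq LC_1(T) \subseteq \cdots$ is compatible with re-basing at any stage, which is what makes the union in Theorem \ref{th1} well-behaved; but this is a corollary, not something needed in the proof itself.
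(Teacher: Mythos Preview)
Your proof is correct and follows exactly the approach the paper indicates: the paper's own proof consists solely of the remark ``The proof is not difficult and uses induction on the positive integer $n$,'' so your argument is precisely the intended one, carried out in full. Your side observation about the finiteness of $LC_k(T)$ (and hence the meaning of the sum $\sum_{w\in LC_n} w\lambda_w$ when $R$ is infinite) is a genuine technicality that the paper does not address; interpreting the sums as finitely supported, as you suggest, is the natural fix and does not affect the induction.
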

The proof is not difficult and uses induction on the positive integer $n$.

\begin{pro} Let $S$ and $T$ be finite sets of vectors of $R^n.$ The following hold:
\begin{enumerate}
\item[(1)] $S \subseteq gen (S) \subseteq span(S),$
\item[(2)] If $S \subseteq T$ then $gen(S) \subseteq gen(T),$
\item[(3)] $gen(S \cap T) \subseteq gen(S) \cap gen(T),$
\item[(4)] $gen(S) \cup gen(T) \subseteq gen (S \cup T),$
\item[(5)] $gen \big (gen (T) \big )= gen(T).$
\end{enumerate}
\end{pro}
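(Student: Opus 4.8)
The idea is that (1)--(5) are all routine consequences of Theorem~\ref{th1} together with the defining minimality property of $gen$, so I would prove them in an order that lets earlier parts feed later ones. I would handle (5) and (2) first since they are almost immediate, then (1), and finally (3) and (4) which combine (1), (2) and the minimality.

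\emph{Part (5).} By definition $gen(T)$ is a $R$-subgroup of $R^n$, and $gen\big(gen(T)\big)$ is the smallest $R$-subgroup containing $gen(T)$. Since $gen(T)$ is itself a $R$-subgroup containing $gen(T)$, minimality forces $gen\big(gen(T)\big)\subseteq gen(T)$; the reverse inclusion is trivial. \emph{Part (2).} If $S\subseteq T$ then $gen(T)$ is a $R$-subgroup containing $S$, so by minimality $gen(S)\subseteq gen(T)$.

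\emph{Part (1).} The inclusion $S\subseteq gen(S)$ is part of the definition of $gen$. For $gen(S)\subseteq span(S)$: $span(S)$ is by definition the smallest \emph{subspace} containing $S$, and every subspace is a $R$-subgroup (this was observed just after Example~\ref{example}, since a subspace is closed under addition and scalar multiplication), so $span(S)$ is a $R$-subgroup containing $S$; minimality of $gen(S)$ then gives $gen(S)\subseteq span(S)$. Alternatively one can argue with Theorem~\ref{th1}: each $LC_i(S)$ lies in $span(S)$ by induction on $i$ (linear combinations of elements of a subspace stay in the subspace), hence $\bigcup_i LC_i(S)\subseteq span(S)$.

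\emph{Parts (3) and (4).} For (4): $S\subseteq S\cup T$ and $T\subseteq S\cup T$, so by (2) we get $gen(S)\subseteq gen(S\cup T)$ and $gen(T)\subseteq gen(S\cup T)$, whence $gen(S)\cup gen(T)\subseteq gen(S\cup T)$. For (3): $S\cap T\subseteq S$ and $S\cap T\subseteq T$, so by (2) $gen(S\cap T)\subseteq gen(S)$ and $gen(S\cap T)\subseteq gen(T)$, hence $gen(S\cap T)\subseteq gen(S)\cap gen(T)$. I do not expect any real obstacle here; the only point requiring a touch of care is part (1), namely justifying that $span(S)$ is a $R$-subgroup (or equivalently that each $LC_i(S)\subseteq span(S)$), which is exactly the remark that subspaces are closed under addition and scalar multiplication recorded earlier in the section.
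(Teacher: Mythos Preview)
Your proof is correct and matches the paper's approach: the paper also declares (1)--(4) straightforward and only spells out (5). The one cosmetic difference is that for (5) the paper argues via Theorem~\ref{th1} (showing $LC_n\big(gen(T)\big)\subseteq gen(T)$ for all $n$), whereas you use the minimality property of $gen$ directly; both arguments are one-liners and equally valid.
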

\begin{proof} $(1),(2),(3)$ and $(4)$ are straightforward. Indeed $gen(T) \subseteq gen  \big ( gen(T) \big )$. Also for all $n \in \mathbb{N},$ we have  that $LC_n \big (  gen(T)\big) \subseteq gen(T).$ Hence $gen \big (gen (T) \big ) \subseteq gen(T).$
\end{proof}
We want to give a  description of $gen(v_1,\ldots,v_n)$ in terms of the basis elements. In the following lemmas, we first derive  analogous results of row-reduction in vector spaces. The first lemma  follows directly from Theorem \ref{th1} and we state it without proof.
\begin{lem}
	For any permutation $\sigma$ of the indices $1,2,...,k$, we have
	\begin{equation*}
	gen(v_1,\ldots,v_k)=gen(v_{\sigma(1)} ,\ldots,v_{\sigma(k)}).
	\end{equation*}
	\label{l4}
\end{lem}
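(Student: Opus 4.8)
The plan is to deduce Lemma \ref{l4} directly from the description of $gen$ given in Theorem \ref{th1}, namely $gen(v_1,\ldots,v_k)=\bigcup_{i=0}^\infty LC_i(v_1,\ldots,v_k)$. The essential observation is that the construction of the sets $LC_i$ depends only on the underlying \emph{set} $\{v_1,\ldots,v_k\}$, not on the order in which the vectors are listed: the base set $LC_0=\{v_1,\ldots,v_k\}$ is unchanged under any permutation $\sigma$ of the indices, and each subsequent $LC_{n+1}$ is defined purely in terms of $LC_n$ by forming all linear combinations $\sum_{w\in LC_n}w\lambda_w$ with $\lambda_w\in R$.

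Concretely, I would argue by induction on $n$ that $LC_n(v_1,\ldots,v_k)=LC_n(v_{\sigma(1)},\ldots,v_{\sigma(k)})$ for every $n\in\mathbb{N}$. For $n=0$ this is the identity of sets $\{v_1,\ldots,v_k\}=\{v_{\sigma(1)},\ldots,v_{\sigma(k)}\}$. For the inductive step, if $LC_n$ coincides for the two orderings, then $LC_{n+1}$, being the set of all linear combinations of the elements of this common set, also coincides. Taking the union over all $i$ then gives
\begin{equation*}
gen(v_1,\ldots,v_k)=\bigcup_{i=0}^\infty LC_i(v_1,\ldots,v_k)=\bigcup_{i=0}^\infty LC_i(v_{\sigma(1)},\ldots,v_{\sigma(k)})=gen(v_{\sigma(1)},\ldots,v_{\sigma(k)}),
\end{equation*}
which is the claim.

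There is no real obstacle here; the only point requiring a moment's care is to be explicit that the definition of $LC_{n+1}$ quantifies over the set $LC_n$ rather than over an indexed list, so that no reordering information survives past level $0$. Since the statement is flagged in the excerpt as following directly from Theorem \ref{th1} and is offered without proof in the paper, I would at most include the one-line inductive remark above, or simply note that $gen$ is a function of the set $\{v_1,\ldots,v_k\}$ alone.
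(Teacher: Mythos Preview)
Your argument is correct and matches the paper's approach: the paper states this lemma without proof, noting that it follows directly from Theorem \ref{th1}, which is precisely the description of $gen$ as $\bigcup_{i=0}^\infty LC_i$ that you invoke. Your observation that each $LC_n$ depends only on the set $\{v_1,\ldots,v_k\}$ is exactly the intended one-line justification.
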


%
We can also show that
\begin{lem}
	If $ 0 \neq \lambda \in R $, then
	\begin{equation*}
	gen(v_1,\ldots,v_k)=gen(v_1  \lambda,\ldots,v_k).
	\end{equation*}
	\label{l3}
\end{lem}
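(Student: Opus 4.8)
The plan is to prove the two inclusions $gen(v_1\lambda,\ldots,v_k)\subseteq gen(v_1,\ldots,v_k)$ and $gen(v_1,\ldots,v_k)\subseteq gen(v_1\lambda,\ldots,v_k)$ separately, using the characterisation of $gen$ as the smallest $R$-subgroup containing the given vectors (equivalently, Theorem \ref{th1}).

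For the first inclusion, note that $v_1\lambda \in gen(v_1,\ldots,v_k)$ since $gen(v_1,\ldots,v_k)$ is an $R$-subgroup containing $v_1$, hence closed under scalar multiplication; and trivially $v_2,\ldots,v_k \in gen(v_1,\ldots,v_k)$. So $gen(v_1,\ldots,v_k)$ is an $R$-subgroup containing $\{v_1\lambda, v_2,\ldots,v_k\}$, and by minimality $gen(v_1\lambda,v_2,\ldots,v_k)\subseteq gen(v_1,\ldots,v_k)$. This direction does not use $\lambda \neq 0$.

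For the reverse inclusion, this is where $\lambda \neq 0$ is essential. Since $R$ is a nearfield, $\lambda$ is invertible, so $v_1 = (v_1\lambda)\lambda^{-1}$ (using the module axiom $m(r_1r_2)=(mr_1)r_2$). Thus $v_1 \in gen(v_1\lambda,v_2,\ldots,v_k)$ because this $R$-subgroup is closed under scalar multiplication, and again $v_2,\ldots,v_k$ are obviously in it. Hence $gen(v_1\lambda,v_2,\ldots,v_k)$ is an $R$-subgroup containing $\{v_1,\ldots,v_k\}$, and by minimality $gen(v_1,\ldots,v_k)\subseteq gen(v_1\lambda,v_2,\ldots,v_k)$. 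Combining the two inclusions gives the claim.

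The argument is essentially routine; the only point requiring care is the use of invertibility of $\lambda$ in a nearfield (to recover $v_1$ from $v_1\lambda$) together with the module associativity axiom, which is precisely why the hypothesis $\lambda\neq 0$ cannot be dropped. No genuine obstacle is expected.
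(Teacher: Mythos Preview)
Your proof is correct and is cleaner than the paper's. Both arguments hinge on the same key observation---that in a nearfield $\lambda\neq 0$ is invertible, so $v_1=(v_1\lambda)\lambda^{-1}$ via the module associativity axiom---but the paper proves the two inclusions by working explicitly with the description $gen=\bigcup_{i=0}^\infty LC_i$ from Theorem~\ref{th1}, chasing elements through the layers $LC_n$ and $LC'_n$ to show $LC_n\subseteq LC'_n$ and conversely. You instead invoke directly the defining property of $gen$ as the \emph{smallest} $R$-subgroup containing the given vectors, which lets you avoid the element-level bookkeeping entirely: once you observe that each generating set lies in the other $gen$, minimality does the rest. Your route is shorter and more conceptual; the paper's route has the (minor) advantage of making the argument self-contained at the level of the $LC_n$ construction, which is the form used in the subsequent lemmas.
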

\begin{proof}
Let	$gen(v_1,\ldots,v_k)=\bigcup_{i=0}^\infty LC_i $ and $ gen(v_1  \lambda,\ldots,v_k)=\bigcup_{i=0}^\infty LC'_i.$
Let $x \in \bigcup_{i=0}^\infty LC_i.$ Then $x \in LC_n$ for some $n \in \mathbb{N}$. Clearly $LC_0 \subseteq LC'_1$	and $ LC_1 = \lbrace v_1  \alpha_1 + v_2 \alpha_2 +\cdots+ v_k \alpha_k| \alpha_1, \ldots, \alpha_k \in R \rbrace$. Say $x \in LC_1.$ Since $ \lambda \neq 0$ there exists $\lambda' \in R$ such that $ \lambda \lambda'=1$. So $x=v_1  \alpha_1 + v_2 \alpha_2 +\cdots+ v_k \alpha_k =v_1 (\lambda \lambda')\alpha_1 + v_2 \alpha_2+\ldots+ v_k \alpha_k =(v_1 \lambda) \lambda'\alpha_1  + v_2 \alpha_2+\ldots+ v_k \alpha_k.$ Then $x \in LC'_1.$ Thus we see if $x \in LC_n$ for some $n \neq 0$ in the expression of $x$ we have
\begin{align*}
v_1 \alpha_1= v_1 1 \alpha_1 = v_1 (\lambda \lambda')\alpha_1 =(v_1 \lambda) (\lambda'  \alpha_1).
\end{align*}
Thus $x \in LC'_n$. So $ x \in \bigcup_{i=0}^\infty LC'_i $. Thus $x \in gen(v_1  \lambda,\ldots,v_k).$
	 
 Let $x \in \bigcup_{i=0}^\infty LC'_i .$ Then $x \in LC'_n$ for some $n \in \mathbb{N}$. In fact $LC'_0= \lbrace  v_1 \lambda, v_2, \ldots,v_k \rbrace$ and we have that $LC'_0 \subseteq LC_1$. Also $ LC'_1 = \lbrace (v_1 \lambda) \alpha_1 + v_2 \alpha_2 +\cdots+ v_k \alpha_k | \alpha_1, \ldots, \alpha_k \in R \rbrace$. Let $x \in LC'_1$ then $x=(v_1 \lambda) \alpha_1 + v_2 \alpha +\ldots+ v_k \alpha_k =v_1 (\lambda \alpha_1) + v_2 \alpha_2+\ldots+ v_k \alpha_k .$ It follows that $x \in LC_1.$ Thus we see that if $x \in LC'_n$ for some $n \neq 0$ in the  expression of $x$ we have,
\begin{align*}
(v_1 \lambda ) \alpha_1 = v_1 (\lambda \alpha_1) \thickspace \mbox{by  the associativity of the multiplication}.
\end{align*} It follows that  $x \in LC_n$. Thus $ x \in \bigcup_{i=0}^\infty LC_i$. Hence $x \in gen(v_1,\ldots,v_k). $ 
\end{proof}
\begin{lem}
	For any scalars $\lambda_2, \lambda_3,\ldots , \lambda_k \in R$, we have 
	\begin{equation*}
	gen(v_1,\ldots,v_k)=gen(v_1 + \sum_{i=2}^{k} v_i \lambda_i,v_2, \ldots,v_k).
	\end{equation*}
	\label{l2}
\end{lem}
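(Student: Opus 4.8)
The plan is to prove the two inclusions separately, using the explicit description $gen(v_1,\ldots,v_k)=\bigcup_{i=0}^\infty LC_i$ from Theorem \ref{th1} together with the characterisation of $gen$ as the smallest $R$-subgroup containing the given vectors. Write $w = v_1 + \sum_{i=2}^k v_i\lambda_i$. For the inclusion $gen(v_1+\sum_{i=2}^k v_i\lambda_i,v_2,\ldots,v_k)\subseteq gen(v_1,\ldots,v_k)$, I would simply observe that $w\in LC_1(v_1,\ldots,v_k)$ (it is a linear combination of $v_1,\ldots,v_k$, taking the coefficient of $v_1$ to be $1$), and each $v_i\in LC_0(v_1,\ldots,v_k)$ for $i\ge 2$; hence $\{w,v_2,\ldots,v_k\}\subseteq gen(v_1,\ldots,v_k)$, and since $gen(v_1,\ldots,v_k)$ is a $R$-subgroup, the smallest $R$-subgroup containing $\{w,v_2,\ldots,v_k\}$ is contained in it.

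For the reverse inclusion, the key point is that $v_1$ can be recovered from $w,v_2,\ldots,v_k$, namely $v_1 = w - \sum_{i=2}^k v_i\lambda_i$. The hitch — and the one spot that needs a little care — is that a $R$-subgroup is only closed under scalar multiplication on the right and under addition, and crucially \emph{not} under the distributive law, so one must be sure that the expression $w - \sum_{i=2}^k v_i\lambda_i$ genuinely lies in the $R$-subgroup generated by $\{w,v_2,\ldots,v_k\}$. But this is fine: each $v_i\lambda_i$ is a scalar multiple of $v_i$, hence in the $R$-subgroup; and a $R$-subgroup, being a subgroup of $(R^n,+)$, is closed under subtraction of finitely many of its elements. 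So $v_1\in gen(w,v_2,\ldots,v_k)$, and together with $v_2,\ldots,v_k\in gen(w,v_2,\ldots,v_k)$ we get $\{v_1,\ldots,v_k\}\subseteq gen(w,v_2,\ldots,v_k)$; minimality of $gen(v_1,\ldots,v_k)$ then gives $gen(v_1,\ldots,v_k)\subseteq gen(w,v_2,\ldots,v_k)$.

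Combining the two inclusions yields the equality. I would present this essentially in the two-sentences-per-direction style used for Lemmas \ref{l4} and \ref{l3}, invoking part $(2)$ of the preceding proposition (monotonicity of $gen$) and part $(5)$ (idempotence) to streamline the argument: since $\{w,v_2,\ldots,v_k\}\subseteq gen(v_1,\ldots,v_k)$ we get $gen(w,v_2,\ldots,v_k)\subseteq gen\big(gen(v_1,\ldots,v_k)\big)=gen(v_1,\ldots,v_k)$, and symmetrically using $v_1=w-\sum_{i=2}^k v_i\lambda_i$ we get the reverse. The only real obstacle is the conceptual one of making sure no illicit use of distributivity sneaks in — once one notes that only closure under addition, subtraction, and right scalar multiplication is needed, the argument is routine and short.
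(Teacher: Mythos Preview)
Your argument is correct. The only small caveat is that the proposition you invoke for monotonicity and idempotence is stated in the paper for \emph{finite} sets $S,T$, so applying $gen(S)\subseteq gen(T)$ with $T=gen(v_1,\ldots,v_k)$ is, strictly speaking, outside its scope; but your first formulation --- that $gen(v_1,\ldots,v_k)$ is an $R$-subgroup containing $\{w,v_2,\ldots,v_k\}$, hence contains the smallest such --- avoids this entirely and is the cleaner way to write it.

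The paper takes a more hands-on route: rather than appealing to minimality, it writes $gen(v_1,\ldots,v_k)=\bigcup_i LC_i$ and $gen(w,v_2,\ldots,v_k)=\bigcup_i LC'_i$ and proves by induction that $LC_m\subseteq LC'_{m+1}$ and $LC'_m\subseteq LC_{m+1}$ for all $m$. The base case is exactly your observation that $v_1=w-\sum_{i=2}^k v_i\lambda_i\in LC'_1$ (and conversely $w\in LC_1$); the inductive step just says that a linear combination of things in $LC'_{m+1}$ lies in $LC'_{m+2}$. So the key algebraic content --- recovering $v_1$ from $w,v_2,\ldots,v_k$ using only addition, subtraction, and right scalar multiplication, without any appeal to a distributive law --- is identical in both arguments. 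Your version packages it via the universal property of $gen$ and is shorter; the paper's version is more explicit about the $LC$-filtration and gives the mild extra information that passing between the two generating sets costs at most one level.
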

\begin{proof}
By Theorem \ref{th1} we can write 	
\begin{align*}
	 gen(v_1,\ldots,v_k)=\bigcup_{i=0}^\infty LC_i \thickspace \mbox{and} \thickspace  gen(v_1 + \sum_{i=2}^{k} v_i \lambda_i,v_2, \ldots,v_k)=\bigcup_{i=0}^\infty LC'_i.
	\end{align*}
We have $LC_0 = \lbrace  v_1,\ldots,v_k \rbrace \thickspace \mbox{and} \thickspace 
	LC_1 = \lbrace \sum_{i=1}^{k} v_i \alpha_i | \alpha_i \in R \rbrace .$ Also $
		LC'_0 = \lbrace  v_1 + \sum_{i=2}^{k} v_i \lambda_i,v_2,\ldots,v_k \rbrace \thickspace \mbox{and} \thickspace
	 LC'_1 = \lbrace (v_1+ \sum_{i=2}^{k} v_i  \lambda_i) \beta_1+ \sum_{i=2}^{k} v_i \beta_i | \alpha_i , \beta_i \in R \rbrace. $
We proceed by induction. Clearly $ v_2, \ldots,v_k \in LC'_0 \subseteq LC'_1.$ Since
	\begin{align*}
	v_1= \big  (v_1+ \sum_{i=2}^{k} v_i \lambda_i \big )- \sum_{i=2}^{k} v_i \lambda_i,
	\end{align*} we have $v_1 \in LC'_1.$ So $LC_0 \subseteq LC'_1$. Assume that $LC_m \subseteq LC'_{m+1}$ for some $m \in \mathbb{N}.$  We need to show that $LC_{m+1} \subseteq LC'_{m+2}.$ Let $x \in LC_{m+1}. $ Then
	\begin{align*}
	x = \sum_{w \in LC_m} w \lambda_w =\sum_{w \in LC'_{m+1}} w \lambda_w.
\end{align*}	 It follows that $x \in LC'_{m+2}.$
For the other inclusion,  we also argue by induction. Clearly $v_2, \ldots,v_k \in LC_1$ and $v_1+ \sum_{i=2}^{k} v_i \lambda_i \in LC_1,$ so $LC'_0 \subseteq LC_1.$ Let $x \in LC'_1,$ Then $x=  \big (v_1+ \sum_{i=2}^{k} v_i \lambda_i  \big ) \beta_1 +  \sum_{i=2}^{k} v_i \beta_i.$ Then $x \in LC_2.$ Assume that   $LC'_m \subseteq LC_{m+1}$ for some $m \in \mathbb{N}.$ We need to show that $LC'_{m+1} \subseteq LC_{m+2}.$ Let $x \in LC'_{m+1}. $ So $x = \sum_{w \in LC'_m} w \lambda_w =\sum_{w \in LC_{m+1}} w  \lambda_w.$ Hence $x \in LC_{m+2}.$
\end{proof}
We need one more lemma which follows directly from Theorem \ref{th1} and we state it without proof.
\begin{lem}
If $w \in gen(v_1,\ldots,v_k)$, then
	\begin{equation*}
	gen(v_1,\ldots,v_k)=gen(w, v_1,v_2, \ldots,v_k).
	\end{equation*}
	\label{l1}
\end{lem}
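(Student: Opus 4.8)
The plan is to prove both inclusions, relying entirely on the characterisation $gen(v_1,\ldots,v_k)=\bigcup_{i=0}^\infty LC_i$ from Theorem \ref{th1} together with the monotonicity and idempotence properties of $gen$ already recorded above.

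First I would dispose of the inclusion $gen(v_1,\ldots,v_k)\subseteq gen(w,v_1,\ldots,v_k)$. This is immediate from monotonicity of $gen$: since $\{v_1,\ldots,v_k\}\subseteq\{w,v_1,\ldots,v_k\}$, property (2) in the list of basic properties of $gen$ gives the inclusion directly. Alternatively one notes that $gen(w,v_1,\ldots,v_k)$ is a fixed $R$-subgroup containing every $v_i$, and $gen(v_1,\ldots,v_k)$ is by definition the smallest $R$-subgroup with that property.

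For the reverse inclusion $gen(w,v_1,\ldots,v_k)\subseteq gen(v_1,\ldots,v_k)$, I would use the minimality built into the definition of $gen$. By hypothesis $w\in gen(v_1,\ldots,v_k)$, and trivially each $v_i\in gen(v_1,\ldots,v_k)$, so $\{w,v_1,\ldots,v_k\}\subseteq gen(v_1,\ldots,v_k)$. Now $gen(v_1,\ldots,v_k)$ is an $R$-subgroup of $R^n$ (Theorem \ref{th1}) that contains the set $\{w,v_1,\ldots,v_k\}$, hence it contains the smallest such $R$-subgroup, namely $gen(w,v_1,\ldots,v_k)$. If one prefers a hands-on argument in place of invoking minimality, one shows by induction on $n$ that $LC_n(w,v_1,\ldots,v_k)\subseteq gen(v_1,\ldots,v_k)$: the base case is $\{w,v_1,\ldots,v_k\}\subseteq gen(v_1,\ldots,v_k)$, and the inductive step uses that $gen(v_1,\ldots,v_k)$ is closed under addition and scalar multiplication, exactly as in the last paragraph of the proof of Theorem \ref{th1} and in property (5).

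There is essentially no obstacle here; the only conceptual point is to recognise that $gen$ behaves as a closure operator — monotone and idempotent — which is precisely what Theorem \ref{th1} and property (5) provide. This is why the statement can legitimately be labelled as following directly from Theorem \ref{th1}.
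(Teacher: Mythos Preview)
Your proof is correct and matches the paper's approach: the paper states this lemma without proof, noting only that it follows directly from Theorem~\ref{th1}, which is precisely the tool you invoke for both inclusions. Your explicit unpacking via monotonicity and minimality (or the inductive $LC_n$ argument) is exactly how one would spell out that remark.
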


%
%
%

Given a set of row vectors $v_1,\ldots,v_k$ in $R^n,$ arranged in a matrix $U $ of size $k \times n$,  $gen(v_1,\ldots,v_k)$ constructed from the matrix $U$ does not change under elementary row operations (swopping rows, scaling rows, adding multiples of a row to another). Recall that two matrices are said to be row equivalent if one can be transformed to the other by a sequence of elementary row operations.
\begin{lem}
Suppose that $k \times n$ matrices $V=  (v_1, \ldots, v_k)^T$ and $W= (w_1,\ldots,w_k)^T$ are row equivalent (with the rows $v_1,\ldots,v_k, w_1,\ldots,w_k \in R^n$). Then 
\begin{equation*}
gen(v_1,\ldots,v_k)=gen(w_1,\ldots,w_k)
\end{equation*}
\end{lem}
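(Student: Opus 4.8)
The plan is to reduce the statement to the three elementary row operations already handled in Lemmas \ref{l4}, \ref{l3}, and \ref{l2} (together with the auxiliary Lemma \ref{l1}), and then to chain these through a finite sequence of operations. Since $V$ and $W$ are row equivalent, by definition there is a finite sequence of matrices $V = U_0, U_1, \ldots, U_m = W$ in which each $U_{j+1}$ is obtained from $U_j$ by a single elementary row operation. If I can show that $gen$ of the rows is preserved by each single step, then composing the $m$ equalities gives $gen(v_1,\ldots,v_k) = gen(w_1,\ldots,w_k)$. So the real content is a case analysis on the type of the single row operation.

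First I would fix notation: write $U_j$ as having rows $u_1^{(j)}, \ldots, u_k^{(j)}$, and prove the one-step claim $gen(u_1^{(j)},\ldots,u_k^{(j)}) = gen(u_1^{(j+1)},\ldots,u_k^{(j+1)})$. There are three cases. If $U_{j+1}$ is obtained by swapping two rows, this is exactly Lemma \ref{l4} applied to the transposition $\sigma$ interchanging the two relevant indices. If $U_{j+1}$ is obtained by scaling a row by a nonzero $\lambda \in R$, this is Lemma \ref{l3} (after first applying Lemma \ref{l4} to move that row into the first position, if one wants to match the exact form stated there, then applying Lemma \ref{l3}, then swapping back — or one simply notes the proof of Lemma \ref{l3} works for any index). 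The only mildly delicate case is adding a multiple of one row to another: Lemma \ref{l2} is stated as replacing $v_1$ by $v_1 + \sum_{i=2}^k v_i\lambda_i$, i.e. adding an $R$-linear combination of \emph{all the other} rows to the first row. The single elementary operation ``add $\lambda$ times row $i$ to row $j$'' is the special case where all coefficients are $0$ except the one on row $i$; after using Lemma \ref{l4} to permute so that the target row is row $1$, this falls under Lemma \ref{l2}, and one permutes back with Lemma \ref{l4} again. I should be careful here about one subtlety of near-vector spaces: because only the left distributive law holds, ``adding $v_i\lambda$'' and ``adding $\lambda v_i$'' are genuinely different, but the standard convention for row operations over a (left) nearfield acting on the right is to scale on the right, which is exactly the form in Lemma \ref{l2}, so this is consistent.

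The main obstacle, then, is not any deep idea but making sure the bookkeeping is airtight: that every elementary row operation really does match (possibly after conjugating by a permutation via Lemma \ref{l4}) one of the three lemmas, and that the chaining over the finite sequence $U_0, \ldots, U_m$ is legitimate — which it is, since equality of $gen$-sets is transitive. I would therefore present the proof as: (i) recall the definition of row equivalence as a finite chain of single elementary operations; (ii) prove the one-step invariance by the three-case argument above, citing Lemmas \ref{l4}, \ref{l3}, \ref{l2}; (iii) conclude by transitivity along the chain. Lemma \ref{l1} can be invoked if convenient when an operation is more naturally described as ``adjoin a vector already in the $gen$, then discard a redundant one'', but it is not strictly needed if one sticks to the three classical operations.
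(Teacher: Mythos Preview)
Your proposal is correct and follows essentially the same approach as the paper: the paper's own proof is simply the one-line remark that the result follows from Lemmas~\ref{l4}, \ref{l3}, \ref{l2} and~\ref{l1}. Your write-up just makes explicit the chaining over a finite sequence of elementary row operations and the conjugation by permutations needed to match the exact form of those lemmas, which is precisely the content the paper leaves implicit.
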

\begin{proof}
This proof follows from Lemmas \ref{l4}, \ref{l3}, \ref{l2} and \ref{l1}.
\end{proof}
We will use  $ \{ v_1,\ldots,\hat{v_i},\ldots,v_k \} $ to denote the fact that the vector $v_i$ has been  removed from the set of vectors $\{ v_1,\ldots, v_k \}.$
\begin{defn} Let $M_R$ be a Beidleman near-vector space.
Let $V=\{v_1,\ldots, v_k\}$ be a finite set of vectors of $M_R$. We say $V$ is $R$-linearly dependent if there exists $v_i\in V$ such that $v_i\in gen(v_1,\ldots,\hat{v_i},\ldots,v_k)$. We say that $V$ is $R$-linearly independent if it is not $R$-linearly dependent.
\end{defn}
A matrix $M$ consisting of $k$ rows and $n$ columns will be denoted by $M=(m_i^j)_{1\leq i \leq k , 1\leq j \leq n }$ where $m_i^j$ is the entry in the $i$-th row and $j$-th column.

\begin{defn}
The $R$-row space of a matrix is the set of all possible $R$-linear combinations of its row vectors.
\end{defn}
Thus the $R$-row space of a given matrix $M$ is the same as the \textit{gen} of the rows of $M$. We now turn to the classification of the smallest $R$-subgroup containing a given set of vectors in $R^n$, the main result of this section. 
\begin{thm}
Let  $v_1,\ldots,v_k$ be vectors in $R^n$. Then
\begin{align*}
gen(v_1,\ldots,v_k)= \bigoplus_{i=1}^{k'}u_iR,
\end{align*}where the $u_i$ (obtained from $v_i$ by an explicit
procedure) for $i \in \lbrace 1, \ldots,k' \rbrace $ are the rows of  the matrix  $U=\big(u_{i}^j  \big) \in R^{k' \times n}$ whose columns have exactly one non-zero entry.
\label{th2}
\end{thm}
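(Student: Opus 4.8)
The plan is to turn the abstract description of $gen(v_1,\ldots,v_k)$ from Theorem \ref{th1} into a concrete direct-sum decomposition by a Gauss-type elimination that respects the one-sided distributivity. First I would assemble the vectors as the rows of a matrix $V\in R^{k\times n}$ and observe, via the row-equivalence lemma, that $gen$ is invariant under the three elementary row operations available to us: row swaps (Lemma \ref{l4}), scaling a row by a nonzero scalar \emph{on the right} (Lemma \ref{l3}), and adding right-scalar multiples of the other rows to the first row (Lemma \ref{l2}); I would also keep Lemma \ref{l1} in reserve, since it lets me append any element of the $gen$ to the generating list without changing it. The subtlety compared with the vector-space case is that the left distributive law fails, so I cannot freely form arbitrary linear combinations when clearing a column — I must only use the operations sanctioned by Lemmas \ref{l4}--\ref{l1}, and these are precisely the ones that keep $gen$ fixed.

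The elimination itself I would describe as the ``explicit procedure'' promised in the statement. Work column by column from left to right. At stage $j$, if the current working rows all have a zero in column $j$, move on. Otherwise pick a working row with a nonzero entry $a$ in column $j$, scale it on the right by $a^{-1}$ to make that entry $1$, move it into the ``done'' pile as a new pivot row $u_\ell$, and then use Lemma \ref{l2}-type operations to subtract suitable right multiples of $u_\ell$ from every other working row so as to kill their $j$-th entries; crucially one must also revisit the already-completed pivot rows and clear their $j$-th entries using $u_\ell$, so that in the end each pivot column contains exactly one nonzero entry. Because the entry being cleared is $1$, the operation ``row $v \mapsto v - u_\ell\cdot(v\text{'s }j\text{-th entry})$'' is exactly of the form covered by Lemma \ref{l2} (after a swap to put the affected row first), so $gen$ is preserved at every step. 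The process terminates after at most $n$ column-stages with a matrix $U=(u_i^j)\in R^{k'\times n}$ whose $k'$ nonzero rows $u_1,\ldots,u_{k'}$ each carry a leading $1$ in a distinct pivot column, and — by the extra clearing of old pivot rows — every pivot column has a single nonzero entry; the non-pivot columns may be handled by the same clearing so that, as the theorem asserts, \emph{every} column of $U$ has exactly one nonzero entry (a row that after elimination still has support confined to non-pivot columns either gets absorbed or becomes a genuine extra pivot row; I would make this bookkeeping precise).

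Having reached $U$, the row-equivalence lemma gives $gen(v_1,\ldots,v_k)=gen(u_1,\ldots,u_{k'})=\bigcup_{i\ge 0}LC_i(u_1,\ldots,u_{k'})$. It then remains to show this equals $\bigoplus_{i=1}^{k'}u_iR$. The inclusion $\bigoplus u_iR\subseteq gen$ is immediate since each $u_iR\subseteq LC_1$. For the reverse inclusion I would show by induction that $LC_m(u_1,\ldots,u_{k'})\subseteq \sum_{i=1}^{k'}u_iR$ for all $m$: a general element of $LC_{m+1}$ is $\sum_{w\in LC_m} w\lambda_w$, each $w$ lies in $\sum u_iR$ by induction, so $w=\sum_i u_i r_i^{(w)}$, and here is where the column structure pays off — because the $u_i$ have pairwise disjoint supports, an element of $\sum u_iR$ is determined coordinatewise and multiplying it on the right by $\lambda_w$ distributes across the (support-disjoint) summands, giving $w\lambda_w=\sum_i u_i(r_i^{(w)}\lambda_w)\in\sum u_iR$; summing over $w$ stays in $\sum u_iR$. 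Thus $gen=\sum_{i=1}^{k'}u_iR$, and disjointness of supports makes the sum direct: if $\sum_i u_i r_i=0$ then looking at each pivot column forces $r_i=0$. Hence $gen(v_1,\ldots,v_k)=\bigoplus_{i=1}^{k'}u_iR$.

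The main obstacle I anticipate is not any single step but the need to verify at each elimination move that the operation used is genuinely one of the $gen$-preserving operations of Lemmas \ref{l4}--\ref{l1} — in particular that clearing a column entry $b$ below a pivot ``$1$'' is literally the operation in Lemma \ref{l2} (replacing a row $v$ by $v - u_\ell b$), and that the extra pass to clear \emph{non-pivot} columns so every column has a unique nonzero entry can always be carried out without reintroducing entries elsewhere; handling the rows whose support never reaches a pivot column, and proving termination and well-definedness of the procedure, is the fiddly part. The distributivity point in the final induction (that right multiplication distributes over a sum of support-disjoint vectors) is genuinely needed because the general left distributive law is unavailable, but it follows directly since support-disjoint vectors are added coordinatewise in independent coordinates.
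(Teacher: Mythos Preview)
Your argument has a genuine gap at exactly the point you flag as ``fiddly'': ordinary Gaussian elimination to reduced row echelon form will \emph{not} in general yield a matrix whose every column has at most one nonzero entry. After RREF the pivot columns are clean, but a non-pivot column may carry several nonzero entries, and no further row operation of the type in Lemmas \ref{l4}--\ref{l2} can remove one of them without destroying a pivot already created. Concretely, take $v_1=(1,0,a)$ and $v_2=(0,1,b)$ in $R^3$ with $a,b\neq 0$: this is already in RREF, column $3$ has two nonzero entries, and any attempt to clear one reintroduces a nonzero entry in column $1$ or $2$. Note also that the theorem allows $k'>k$ (indeed Theorem \ref{ts} shows $n-1$ vectors can have $gen$ equal to all of $R^n$), whereas your procedure never increases the row count---a clear sign that a mechanism for producing new rows is missing.

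The missing idea, and the heart of the paper's proof, is to exploit the \emph{failure} of right distributivity. Given two rows $w_r,w_s$ sharing a nonzero column $j$ (the first such column), pick $\alpha,\beta,\lambda\in R$ with $(\alpha+\beta)\lambda\neq\alpha\lambda+\beta\lambda$, set $\alpha'=(w_r^j)^{-1}\alpha$, $\beta'=(w_s^j)^{-1}\beta$, and form
\[
\theta=(w_r\alpha'+w_s\beta')\lambda-w_r(\alpha'\lambda)-w_s(\beta'\lambda)\in LC_2(w_r,w_s)\subseteq gen.
\]
Then $\theta^l=0$ for $l<j$ while $\theta^j=(\alpha+\beta)\lambda-\alpha\lambda-\beta\lambda\neq 0$. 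This $\theta$ is not obtainable by a single row operation (it lives in $LC_2$, not $LC_1$), but Lemma \ref{l1} lets you append it as a new row, creating a fresh pivot in column $j$ with which to clear $w_r^j$ and $w_s^j$. Iterating this ``distributivity trick'' is what pushes $k'$ above $k$ and reaches the one-entry-per-column form. Your final step---that once the $u_i$ have pairwise disjoint supports one has $gen(u_1,\ldots,u_{k'})=\bigoplus_i u_iR$ because right multiplication by $\lambda$ acts coordinatewise on support-disjoint sums---is correct and is essentially how the paper concludes; but you need the distributivity trick to get there.
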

\begin{proof}
Given a particular set of vectors $v_1,\ldots,v_k$, arrange them in a matrix $V$ whose $i$-th row is composed of the components of $v_i$, i.e., $V=(v_i^j)$ where $1 \leq j \leq n$. Then $gen(v_1,\ldots,v_k)$ is the $R$-row space of $V$, which is a $R$-subgroup of $R^n$. We can then do the usual Gaussian  elimination on the rows. According to the previous lemmas, the $gen$ spanned by the rows will remain unchanged with each operation (swopping rows, scaling rows, adding multiples of a row to another). When the algorithm terminates, we obtain a matrix $W \in R^{k \times n }$ in reduced row-echelon form (denoted by $RREF(V)$). Let the non-zero rows of $W$ be denoted by $w_1,w_2,\ldots,w_t$ where $t \leq k$.
\begin{itemize}
\item[Case 1:] Suppose that every column has at most one non-zero entry, then
\begin{equation*}
	gen(v_1,\ldots,v_k)=gen(w_1,\ldots,w_t)=w_1R+w_2R+\cdots+w_tR
\end{equation*} 
where the sum is direct. In this case we are done. 
\item[Case 2:] Suppose that the $j$-th column is the first column that has two non-zero entries, say $w_r^j\neq 0 \neq w_s^j$ with $r<s$, (we necessarily have $r,s\leq j$) where $w_r^j$ is the $j$-th entry  of row $w_r$ and $w_s^j$  the $j$-th entry  of row $w_s.$
Let $ \alpha, \beta, \gamma \in R$ such that $(\alpha +\beta) \lambda \neq \alpha  \lambda  + \beta  \lambda .$
We apply what we will call the \say{\textsl{distributivity trick}}:

Let $\alpha'= (w_r^j)^{-1}\alpha$ and $\beta'= (w_s^j)^{-1}\beta$. Then consider the new row
\begin{equation*}
	\theta=(w_r \alpha' + w_s  \beta') \lambda - w_r  (\alpha' \lambda)-w_s (\beta ' \lambda). 
\end{equation*}
Since $\theta \in LC_2(w_r,w_s)$ we have $\theta\in gen(w_1,\ldots,w_t)$.

 For $ 1 \leq l < j,$ either $w_r^l$ or $w_s^l$ is zero because the $j$-th column is the first column that has two non-zero entries, thus $\theta^l=0$. Note that by the choice of $\alpha,\beta,\lambda$, we have 
\begin{align*}
\theta^j &=(w_r^j)\alpha ' + w_s^j \beta') \lambda - (w_r^j\alpha ') \lambda- (w_s^j\beta ') \lambda \\
&=\big ( w_r^j (w_r^j)^{-1} \alpha + w_s^j(w_s^j)^{-1}  \beta \big ) \lambda- \big (  w_r^j ( w_r^j )^{-1} \alpha \big ) \lambda - \big (  w_s^j ( w_s^j )^{-1} \beta \big ) \lambda \\
&=(\alpha +\beta) \lambda - \alpha \lambda - \beta \lambda \neq 0.
\end{align*}
It follows  that $\theta^j\neq 0$.
Hence $ \theta =(0, \ldots, 0,\theta^j,\theta^{j+1},  \ldots, \theta^n )$. We now multiply the row $\theta$ by $ (\theta^j)^{-1},$ obtaining the row $\phi=(0, \ldots, 0,1,\theta^{j+1}(\theta^j)^{-1},  \ldots, \theta^n (\theta^j)^{-1}) \in gen(w_1,\ldots,w_k)$ where  $\phi ^j=1$ is  the pivot that we have created.

 As a next step, we form a new matrix of size $(t+1) \times n $ by adding $\phi$ to the rows $w_1, \ldots,w_t.$ On  this augmented matrix we replace the rows $w_r, w_s$  with $ y_r=w_r-(w_r^j) \phi, y_s=w_r-(w_s^j) \phi$ respectively. This yields another new matrix  composed of  the rows $w_1,\ldots,w_{r-1},y_r, \ldots, y_s, \phi, w_{s+1},  \ldots,w_t $ which has only one non-zero entry in the $j$-th column. By Lemma \ref{l1}, the  \textit{gen} of the rows of the augmented matrix is the  \textit{gen} of the rows of $W$ (which in turn is  $gen(v_1,\ldots,v_k)$).
Hence
\begin{align*}
gen(v_1, \ldots,v_k) & = gen(w_1,\ldots,w_r, \ldots, w_s, \ldots,w_t)\\
&= gen(w_1,\ldots,y_r, \ldots, y_s, \phi, \ldots,w_t).
\end{align*}

Continuing this process, we can eliminate all columns with more than one non-zero entry. Let the final matrix have rows $u_1,u_2,\ldots, u_{k'}$. Then
\begin{equation*}
	gen(v_1,\ldots,v_k)  =gen(w_1,\ldots,w_t)=gen(u_1,\ldots,u_{k'})=u_1R+u_2R+\ldots+u_{k'}R, 
\end{equation*} 
where the sum is direct.
\end{itemize}
\end{proof}

\begin{rem} ~
\begin{itemize} 
\item The procedure described in the proof of Theorem \ref{th2} will be called expanded Gaussian elimination (eGe algorithm).
\item Let  $v_1, \ldots, v_k \in R^n$ be vectors arranged in a matrix $ V \in R^{k \times n}.$ Suppose $RREF(V) \\ =W$ and let $N(w^j)$ be the number of non-zero entries of the $j$-th column of $W$ where $1 \leq j \leq n $.  Let $t$ be the number of non-zero rows of $W.$
If $v_1, \ldots, v_k$ are $R$-linearly independent and $ \sum_{j=1}^n N(w^j) \geq n +k-t$ then  by Theorem \ref{th2}, we shall add $k-t+1$ new rows to the $t$ non-zero rows. Hence we  necessarily have that  $k \leq k' \leq n.$
\end{itemize}

\end{rem}
\begin{exa}
Let $R=DN(3,2)$ be the finite Dickson nearfield that arises from the pair $(3,2)$ and $v_1=(1,1,2,x+1,1), v_2=(0,0,0,2x+2,1),v_3=(1,1,1,x+2,1) \in R^5.$ By Theorem \ref{th2}, we have
\begin{align*}
gen(v_1,v_2,v_3)= \bigoplus_{i=1}^{4}u_iR,
\end{align*}
where $u_1=(1,1,0,0,0), u_2=(0,0,1,0,0), u_3=(0, 0,0,1,0)$ and $u_4=(0,0,0,0,1).$  
Note that $
gen(v_1,v_2,v_3)=  \big \{ \left( x ,x,y ,z,t \right)^T \vert \thickspace x,y ,z,t\in R  \big \}.$
However $gen(v_1,v_2,v_3)$ is not a subspace of $R^5.$ To see this, let $h \in gen(v_1,v_2,v_3), r \in R$ and $(r_1,r_2,r_3,r_4,r_5) \in R^5$ such that $r_1 \neq r_2.$
We have,
\begin{align*}
  \left( \left(r_1,r_2,r_3,r_4,r_5 \right) ^T +  \left( x ,x,y ,z,t \right)^T \right)r-\left(r_1,r_2,r_3,r_4,r_5 \right) ^T r  =  \big (  (r_1 +x)r-r_1r, \\ (r_2 +x)r-r_2r,  (r_3 +y)r-r_3r ,   (r_4 +z)r-r_4r, (r_5 +t)r-r_5r  \big )^T 
    \notin gen(v_1,v_2,v_3).
\end{align*}

\end{exa}

One fundamental property of \textit{gen}  is the following.
\begin{thm} Let $n \in \mathbb{N}$ and $R$ be a nearfield.
There exists vectors $v_1, \ldots, v_{n-1}$ of $R^n$ such that
\begin{align*}
gen(v_1, \ldots, v_{n-1})=R^n.
\end{align*} 
\label{ts}
\end{thm}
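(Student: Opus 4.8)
The plan is to exhibit $n-1$ explicit vectors and then run the expanded Gaussian elimination of Theorem~\ref{th2} on them. Since throughout this section $R=DN(q,m)$ with $m>1$, the set $R\setminus R_d$ is non-empty, so we may fix an element $\lambda\in R\setminus R_d$ together with $\alpha,\beta\in R$ for which $\delta:=(\alpha+\beta)\lambda-\alpha\lambda-\beta\lambda\neq 0$. Writing $e_1,\dots,e_n$ for the standard basis of $R^n$, I would take $v_i=e_i+e_n$ for $1\le i\le n-1$. Arranged as the rows of a matrix $V\in R^{(n-1)\times n}$, this $V$ is already in reduced row-echelon form: columns $1,\dots,n-1$ are pivot columns, each with a single non-zero entry, while column $n$ is the unique non-pivot column and carries $n-1$ non-zero entries (all equal to $1$).

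The key step is to apply the distributivity trick from the proof of Theorem~\ref{th2} to column $j=n$, taking the two rows $r=1$ and $s=2$. Since $v_1^n=v_2^n=1$, the auxiliary scalars are $\alpha'=\alpha$ and $\beta'=\beta$, and a coordinatewise computation --- in which left distributivity cancels the entries in columns $1,\dots,n-1$ --- gives
\[
\theta:=(v_1\alpha+v_2\beta)\lambda-v_1(\alpha\lambda)-v_2(\beta\lambda)=\delta\,e_n .
\]
As $\theta\in LC_2(v_1,v_2)\subseteq gen(v_1,v_2)\subseteq gen(v_1,\dots,v_{n-1})$ and $gen(v_1,\dots,v_{n-1})$ is an $R$-subgroup of $R^n$, we get $e_n=\theta\delta^{-1}\in gen(v_1,\dots,v_{n-1})$. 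Now Lemma~\ref{l1} lets us adjoin $e_n$ to the generating list without changing $gen$, and the elementary row operations of Lemmas~\ref{l4} and~\ref{l2} then let us subtract $e_n$ from each $v_i$, turning $v_i$ into $e_i$. Hence
\[
gen(v_1,\dots,v_{n-1})=gen(e_1,\dots,e_{n-1},e_n)=R^n ,
\]
the last equality holding because $(r_1,\dots,r_n)=\sum_{i=1}^n e_i r_i\in LC_1(e_1,\dots,e_n)$ for every $(r_1,\dots,r_n)\in R^n$ (equivalently, by Case~1 of Theorem~\ref{th2}).

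The one piece of real content is the identity $\theta=\delta\,e_n$: one must check that $\theta$ is zero in coordinates $1,\dots,n-1$ and that its $n$-th coordinate is exactly $(\alpha+\beta)\lambda-\alpha\lambda-\beta\lambda$. This is precisely where the failure of right distributivity for $\lambda$ is exploited, and it is the mechanism by which $n-1$ generators suffice to produce the missing standard basis vector $e_n$. Two of the $v_i$ have to share a non-zero entry in the column where the trick is run, so the construction requires $n\ge 3$; for $n\le 2$ one has $gen(v)=vR$ for a single vector $v$, so the statement should be understood for $n\ge 3$.
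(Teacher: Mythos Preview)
Your proof is correct and follows essentially the same approach as the paper: choose $n-1$ vectors that agree in one coordinate and are otherwise standard, apply the distributivity trick in that shared column to extract the corresponding standard basis vector, and then row-reduce to obtain all of $e_1,\dots,e_n$. The only cosmetic differences are that the paper places the shared entry in the \emph{first} coordinate (taking $v_i=e_1+e_{i+1}$) and runs the trick with all $n-1$ vectors simultaneously rather than just two; your closing remark that the construction requires $n\ge 3$ is a valid observation that the paper leaves implicit.
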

\begin{proof}
Let $\alpha_1,\alpha_2,\ldots,\alpha_{n-1} , \lambda \in R$  such that $(\alpha_1 + \alpha_2+ \cdots+\alpha_{n-1}  ) \lambda  \neq \alpha _1\lambda + \alpha_2 \lambda +\cdots +\alpha_{n-1} \lambda. $ We choose $v_1= (1,1,0, 0, \ldots,0)^T, v_2= (1,0,1, 0, \ldots,0)^T, \ldots, v_{n-1}= (1,0,0, \ldots,0,1)^T \in R^n.$  Then
  \begin{multline*}
  v  =  \left( (1,1,0, 0, \ldots,0)^T \alpha_1+  (1,0,1, 0, \ldots,0)^T \alpha_2  + \cdots + (1,0,0, \ldots,0,1)^T \alpha_{n-1}\right)    \lambda -  \\ (1,1,0, 0, \ldots,0)^T \alpha_1 \lambda -    (1,0,1, 0, \ldots,0)^T  \alpha_2 \lambda - \cdots- \\  (1,0,0, \ldots,0,1) ^T \alpha_{n-1}  \lambda \\ = 
 \left(
   (\alpha_1+\alpha_2+\ldots+\alpha_{n-1}) \lambda  - \alpha_1 \lambda-\alpha_2 \lambda- \cdots -\alpha_{n-1} \lambda, 0, 0, \ldots, 0 \right)^T.
 \end{multline*}
 So $v=   \left( \gamma, 0,
   0, \ldots , 0 \right)^T$ where $ \gamma = (\alpha_1+\alpha_2+\cdots+\alpha_{n-1}) \lambda - \alpha_1 \lambda-\alpha_2 \lambda- \cdots -\alpha_{n-1} \lambda$.  It follows that 
$
 v \gamma^{-1}= (1,0,0, 0, \ldots,0) \in gen (v_1,v_2,\ldots,v_{n-1}).$ 
  
  Let $\left( x_1, x_2, x_3, \ldots, x_n \right)^T \in R^n.$ Then 
  \begin{align*}
\left( x_1, x_2, x_3, \ldots, x_n \right)^T  =& v_1x_2+v_2x_3+ \cdots+ v_{n-1}x_n- v \gamma^{-1} (x_n+x_{n-1}+x_{n-2} \\ & + \cdots+ x_2-x_1) \\
  =  & v_1x_2+v_2x_3+ \cdots v_{n-1}x_n- \big( (v_1 \alpha_1 +v_2 \alpha_2+\cdots  v_{n-1} \alpha_{n-1} ) \lambda \\& -v_1 \alpha_1 \lambda-v_2 \alpha_2 \lambda - \cdots -
    v_{n-1} \alpha_{n-1} \lambda \big ) \gamma^{-1} (x_n+x_{n-1}+ \\ &  \quad \quad \quad \quad  \quad \quad x_{n-2}+ \cdots+ x_2-x_1).
  \end{align*}
  
  It follows that $ \left( x_1, x_2, x_3, \ldots, x_n \right)^T \in LC_2(v_1, \ldots, v_{n-1}),$ so $ \left( x_1, x_2, x_3, \ldots, x_n \right)^T \in gen(v_1, \ldots, v_{n-1})$. Thus $ gen(v_1, \ldots, v_{n-1}) =R^n.$
\end{proof}

\begin{rem}Surprisingly, unlike in traditional vector spaces, by Theorem \ref{ts} there exists two vectors $v_1$ and $v_2$ such that $gen (v_1,v_2)=R^3.$ By experimental computations, there  exists  no two vectors $v_1$ and $v_2$ such that $gen(v_1,v_2)$ is the whole space with high enough dimension, for instance  $R^{n^{n^2}}.$ 
\end{rem}

We end this section with a description of $gen(m)$ for $m \in M_R$.
\begin{lem}
Let $M_R$ be a Beidleman near-vector space. Let $m \in M.$ Then
$ gen(m)=mR.$
\label{lemm2}
\end{lem}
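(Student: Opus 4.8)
The plan is to show the two inclusions $gen(m) \supseteq mR$ and $gen(m) \subseteq mR$ separately, using the explicit description of $gen$ from Theorem \ref{th1}, namely $gen(m) = \bigcup_{i=0}^\infty LC_i$ where $LC_0 = \{m\}$.

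For the inclusion $mR \subseteq gen(m)$: by definition $LC_1$ consists of all linear combinations of elements of $LC_0 = \{m\}$, so $LC_1 = \{ m\lambda \mid \lambda \in R\} = mR$. Since $LC_1 \subseteq \bigcup_{i=0}^\infty LC_i = gen(m)$, we get $mR \subseteq gen(m)$. (One can also invoke the minimality half of Theorem \ref{th1}: $mR$ is itself an $R$-subgroup containing $m$ — it is closed under the module addition because $M_R$ is a Beidleman near-vector space, hence $(M,+)$ is abelian by Lemma \ref{r}, and $m\lambda_1 + m\lambda_2 = m(\lambda_1+\lambda_2)$ by left distributivity in the module, and it is closed under scalar multiplication by associativity $(m\lambda)r = m(\lambda r)$ — so any $R$-subgroup containing $m$, in particular $gen(m)$, contains $mR$ as well; but the direct computation of $LC_1$ is cleaner.)

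For the reverse inclusion $gen(m) \subseteq mR$: it suffices, by Theorem \ref{th1}, to show $mR$ is an $R$-subgroup of $M_R$ containing $m$, since $gen(m)$ is the smallest such. That $m = m\cdot 1 \in mR$ is clear. That $mR$ is closed under scalar multiplication is immediate from module associativity: $(mr)s = m(rs) \in mR$. That $(mR,+)$ is a subgroup follows from left distributivity: $m r_1 + m r_2 = m(r_1 + r_2) \in mR$, and $-(mr) = m(-r) \in mR$ since $0 = m\cdot 0 = m(r + (-r)) = mr + m(-r)$; abelianness is guaranteed by Lemma \ref{r} but is not even needed here. Hence $mR$ is an $R$-subgroup containing $m$, so $gen(m) \subseteq mR$.

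There is no real obstacle here — the statement is essentially a sanity check that the closure operation $gen$ applied to a single generator reproduces the cyclic $R$-subgroup, and the only subtlety is making sure one uses \emph{left} distributivity of the module (which holds in a nearring module, $m(r_1+r_2) = mr_1 + mr_2$) rather than the right distributivity that fails in this non-ring setting. Combining both inclusions gives $gen(m) = mR$.
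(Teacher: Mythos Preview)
Your proof is correct and follows essentially the same route as the paper: both arguments hinge on the observation that $LC_n(m)=mR$ for every $n\geq 1$ (the paper states this in one line, while you spell out the $n=1$ case and then invoke minimality to avoid an induction). The only difference is presentational---you verify explicitly that $mR$ is an $R$-subgroup via left distributivity and associativity, whereas the paper leaves this implicit in the claim $LC_n=mR$.
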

\begin{proof}
$LC_n(m)=mR$ for all positive integers $n$. Using Theorem \ref{th2} we have that $gen(m)=mR.$
\end{proof}

\section{Classification of the subspaces of $R^n$}

The extended Gaussian elimination algorithm can be used to determine the span of a given set of vectors, which is defined as the smallest submodule containing all the given vectors. 
From Example \ref{example}, we know that   some $R$-subgroups are not necessarily subspaces of Beidleman near-vector spaces $R^n,$ where $R$ is a nearfield. In Theorem \ref{th3} we describe the smallest subspace of $R^n$ containing a given set of vectors. This  allows us to classify all the subspaces of $R^n$ in Corollary \ref{th4}.
\begin{thm}
Let $v_1,\ldots,v_k$ be vectors of $R^n$. Then
\begin{align*}
span(v_1,\ldots,v_k)= \bigoplus_{i=1}^{k''}e_iR,
\end{align*}where $e_i$ is a row vector with only one non-zero entry \say{$1$} in its $i$-th position obtained from $v_i$ by an explicit procedure.
\label{th3}
\end{thm}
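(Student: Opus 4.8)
The plan is to push the output of the expanded Gaussian elimination from Theorem \ref{th2} one step further, using the submodule closure condition $(m+n)r-mr\in N$ together with one more application of the distributivity trick to manufacture standard basis vectors.

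First I would reduce to a convenient generating set. Since every subspace is an $R$-subgroup and $\mathrm{gen}(v_1,\dots,v_k)\subseteq \mathrm{span}(v_1,\dots,v_k)$, Theorem \ref{th2} gives rows $u_1,\dots,u_{k'}$ with $\mathrm{gen}(v_1,\dots,v_k)=\bigoplus_{i=1}^{k'}u_iR$, and the matrix $U=(u_i^{\,j})$ has at most one non-zero entry per column; in particular the supports $J_i=\{\,j:u_i^{\,j}\neq 0\,\}$ are pairwise disjoint. Monotonicity of $\mathrm{span}$ (each $u_i$ lies in $\mathrm{span}(v_1,\dots,v_k)$, and each $v_i$ lies in $\bigoplus u_jR\subseteq \mathrm{span}(u_1,\dots,u_{k'})$) yields $\mathrm{span}(v_1,\dots,v_k)=\mathrm{span}(u_1,\dots,u_{k'})$, and since a finite sum of subspaces is a subspace, $\mathrm{span}(u_1,\dots,u_{k'})=\sum_{i=1}^{k'}\mathrm{span}(u_i)$. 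So everything comes down to computing $\mathrm{span}(u)$ for a single row $u$ with support $J=\{j_1<\dots<j_s\}$ and non-zero entries $a_l=u^{\,j_l}$.

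The heart of the argument is the claim $\mathrm{span}(u)=\bigoplus_{l=1}^{s}e_{j_l}R$, where $e_j$ is the standard basis vector. The inclusion $\subseteq$ is immediate since $\bigoplus_l e_{j_l}R$ is a subspace (a finite sum of the subspaces $e_jR$) containing $u=\sum_l e_{j_l}a_l$. For $\supseteq$, put $N=\mathrm{span}(u)$; then $N\supseteq uR$ and $N$ is closed under $m\mapsto (m+n)r-mr$ for $m\in R^n,\ n\in N,\ r\in R$. Taking $m=e_{j_1}c$ and $n=u\alpha$, and subtracting $u(\alpha r)\in N$, all coordinates outside position $j_1$ cancel (the supports of the other pivot rows avoid $j_1$, and one uses the module axioms), leaving
\[
e_{j_1}\big((c+a_1\alpha)r-cr-a_1\alpha r\big)\in N .
\]
Because $a_1\neq 0$ and $R$ is a nearfield, $\alpha\mapsto a_1\alpha$ is a bijection of $R$, so $a_1\alpha$ ranges over all of $R$; since $R=DN(q,m)$ with $m>1$ has non-distributive elements, we may pick $c,\alpha,r$ so that $\gamma:=(c+a_1\alpha)r-cr-a_1\alpha r\neq 0$, whence $e_{j_1}=(e_{j_1}\gamma)\gamma^{-1}\in N$. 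Now $u-e_{j_1}a_1\in N$ has support $\{j_2,\dots,j_s\}$, and induction on $s$ (base case $s=1$: $e_{j_1}=u(a_1)^{-1}\in N$) gives $e_{j_l}\in N$ for all $l$, proving the claim. Assembling, $\mathrm{span}(v_1,\dots,v_k)=\sum_i\bigoplus_{j\in J_i}e_jR=\bigoplus_{j\in J}e_jR$ with $J=\bigsqcup_iJ_i$ (the sum is direct since the $J_i$ are disjoint); relabelling the $e_j$, $j\in J$, as $e_1,\dots,e_{k''}$ gives the statement, the explicit procedure being: run the eGe algorithm, then split every multi-support output row into standard basis vectors by the above distributivity trick.

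I expect the main obstacle to be the single-row step, specifically verifying that the auxiliary coordinates genuinely cancel and, more importantly, that a non-distributive triple can still be realized when the first summand is forced to have the special form $a_1\alpha$ — this is exactly where one invokes that left multiplication by a non-zero element of a nearfield is invertible, combined with the presence of non-distributive elements in $DN(q,m)$ for $m>1$. The remaining bookkeeping (monotonicity and additivity of $\mathrm{span}$, directness of the final sum) is routine.
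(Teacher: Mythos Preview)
Your proof is correct and rests on the same core idea as the paper's: reduce via Theorem \ref{th2} to rows $u_1,\dots,u_{k'}$ whose columns have at most one non-zero entry, then exploit the submodule condition $(m+n)r-mr\in N$ together with a non-distributive triple to extract standard basis vectors. Your single-row computation (take $m=e_{j_1}c$, $n=u\alpha$, then subtract $u(\alpha r)$ so that every coordinate except $j_1$ cancels by associativity) is precisely the paper's ``adjustment trick'' specialised to one row.

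Where you differ is organization, not substance. The paper keeps the full list $u_1,\dots,u_{k'}$ throughout and iterates the adjustment trick on the matrix, splitting into the case of a row with exactly two non-zero entries versus more. You instead first observe $\mathrm{span}(u_1,\dots,u_{k'})=\sum_i\mathrm{span}(u_i)$ (using that a finite sum of subspaces is a subspace) and then treat a single row by induction on the size of its support; this avoids the case analysis and makes the directness of the final decomposition immediate from the disjointness of the supports $J_i$. It is a tidier packaging of the same argument rather than a different route.
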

\begin{proof}

Let $v_1,\ldots,v_k \in R^n$. Since a submodule is an $R$-subgroup, we have
\begin{align*}
gen(v_1,\ldots,v_k )\subseteq span (v_1,\ldots,v_k).
\end{align*}  Note that  $gen(v_1,\ldots,v_k )$ will be a subspace if $gen(v_1,\ldots,v_k )=span(v_1,\ldots,v_k ).$

Let $u_1,\ldots,u_{k'}$ be determined as before in Theorem \ref{th2}.
\begin{itemize}
\item[Case 1:]Suppose  that  $u_i$ for all $i \in \{1 ,\ldots,k' \}$ has only one non-zero component.  Then 
$u_1R+u_2R+\ldots+u_{k'}R $ is a submodule of $R^n$, hence it is the desired span. So
\begin{align*}
span(v_1,\ldots,v_k ) &= gen(v_1,\ldots,v_k ) \\ 
 & =  \bigoplus_{i=1}^{k'}u_iR. 
\end{align*}
\item[Case 2:] Suppose that no row has more than two non-zero entries and  $u_i$ is the first $i$-th row that has entries $u_i^{j_1}\neq 0 \neq u_i^{j_2}$ i.e., $u_i= \big (0, \ldots,0, u_i^{j_1}, 0, \ldots, 0, u_i^{j_2}, 0 , \ldots, 0 \big )$. We  apply what we will call the \say{\textsl{adjustment trick}}. Let $ \alpha, \beta, \lambda \in R$ such that $(\alpha +\beta) \lambda \neq \alpha  \lambda  + \beta  \lambda .$
Define $m \in R^n$ by $m^j=\alpha \delta_{jj_2}$ for $1 \leq j \leq n$ where $\delta_{ij}$ is the Kronecker function. So
\begin{align*}
m^j =
\begin{cases}
0 \thickspace \text{if $j \neq j_2$}  \\
\alpha  \thickspace \text{if $j = j_2$}  
\end{cases}. 
\end{align*}
It follows that $m=(0, \ldots,0,\alpha, 0 , \ldots, 0).$ 

Define $a=u_i  ((u_i^{j_2})^{-1}\beta)= \big (0, \ldots,0,u_i^{j_1}(u_i^{j_2})^{-1}\beta, 0, \ldots, 0, \beta, 0, \ldots, 0  \big )$, so that $a^{j_2}=\beta$. 

Then let $v\in R^n$ be defined as
\begin{align*}
	v & = (m+a) \lambda - m  \lambda \\
	&=  \big (0, \ldots,0,u_i^{j_1}(u_i^{j_2})^{-1}\beta \lambda,  0, \ldots, 0,(\alpha +\beta) \lambda - \alpha \lambda, 0, \ldots, 0  \big ).
\end{align*}

By the additional condition on submodules, we must have $v\in span(u_1,\ldots,u_{k'})$. Hence we may add $v$ without changing the span (it strictly increases the gen though). Note that by construction, $v\neq a \lambda$. In fact, the only non-zero entry of $v-a  \lambda$ is the $j_2$ component. Hence we may reduce the $j_2$ entry of $u_i$ to zero. Since $a \in span (u_1,\ldots,u_{k'}),$ 
\begin{align*}
v-a \lambda & = \big (0, \ldots,0, (\alpha +\beta) \lambda - \alpha \lambda -\beta \lambda, 0, \ldots, 0  \big ) \\ & = \big (0,0, \ldots,0,\gamma , 0, \ldots, 0  \big ) \in span (u_1,\ldots,u_{k'}) \\ \thickspace \mbox{where} & \thickspace \gamma =(\alpha +\beta) \lambda - \alpha \lambda -\beta \lambda \thickspace \mbox{is in the $j_2$-th position of} \thickspace  v-a \lambda.
\end{align*} So $(v-a \lambda) \gamma ^{-1}= (0, \ldots,0,1,0 \ldots, 0)$ is denoted by $e_{j_2}$ (the row that only has a non-zero entry \say{$1$}  in its $j_2$-th column) and $e_{j_2} \in span (u_1,\ldots,u_{k'})$. Also $ \big ( u_i (u_i^{j_2})^{-1}- e_{j_2} \big ) \big ( u_i^{j_1}(u_i^{j_2})^{-1}\big )^{-1} = (0, \ldots,0,u_i^{j_1}(u_i^{j_2})^{-1},0,\ldots,0) \big (u_i^{j_1}(u_i^{j_2})^{-1} \big )^{-1}$ is denoted as $e_{j_1}$ (the row that  only has a non-zero entry \say{$1$} in its $j_1$-th column) and $e_{j_1}\in span (u_1,\ldots,u_{k'}).$ We now add the new rows $e_{j_1},e_{j_2}$ to the rows $u_1,\ldots,u_i, \ldots, u_{k'}$ and remove the row $u_i$ (the span is unchanged). We have that 
\begin{align*}
span(v_1,\ldots,v_k ) &= span (u_1,\ldots,u_i, \ldots, u_{k'})\\ & =  span (u_1,\ldots,e_{j_1},e_{j_2},u_i, \ldots, u_{k'}) \\
&= span (u_1,\ldots,e_{j_1},e_{j_2}, \ldots, u_{k'}). 
\end{align*}
Continuing the implementations of \say{\textsl{adjustment trick}} on the other rows $u_t$ (which has also two non-entries) where $t>i$, we may eliminate occurrences of multiple non-zero entries in the $u_t$ while appending new vectors with only one non-zero entry to make up for them. Thus
\begin{align*}span(v_1,\ldots,v_k ) &= span (u_1,\ldots,u_i, \ldots, u_{k'})\\
&= span(e_1,e_1,e_2, \ldots, e_{k''})\\
&=\bigoplus_{i=1}^{k''}e_iR,
\end{align*}
where $e_i$ is the $i$-th row that has a non-zero entry \say{$1$} in  its  $i$-th position only.
\item[Case 3:] Suppose that there exists at least  one row which has more than two non-zero entries and  $u_i$ is the first $i$-th row  with  $l$ non-zero entries where $l \geq 2$. Then continuing with the procedure in Case $2$, we must apply the  \say{\textsl{adjustment trick}} on the first two non-zero entries of $u_i$ and repeating the procedure on the other non-zero entries. Thus  we may eliminate occurrences of multiple non-zero entries in the $u_i$ while appending new vectors with only one non-zero entry to make up for them. Hence
\begin{align*}
span(v_1,\ldots,v_k ) &= span (u_1,\ldots,u_i, \ldots, u_{k'}) \\
& =  span (u_1,\ldots,u_i,e_{j_1},e_{j_2}, \ldots, e_{j_l}, \ldots, u_{k'})\\
&=span (u_1,\ldots,e_{j_1},e_{j_2}, \ldots, e_{j_l}, \ldots, u_{k'}) \\
&= span(e_1,e_1,e_2, \ldots, e_{k''})\\
&=\bigoplus_{i=1}^{k''}e_iR,
\end{align*} where $e_i$ is the $i$-th row that has a non-zero entry \say{$1$} in  column $i$ only. Thus we have proved that the span can always be written as a direct sum of vectors with only one non-zero entry.
\end{itemize}

\end{proof}
\begin{rem} Let $v_1,\ldots,v_k$ be finite number of vectors in $R^n$ arranged in a matrix $V$. Since $span(v_1,\ldots,v_k ) $ is the row space of the matrix $V,$ by Theorem \ref{th3} the dimension of $span(v_1,\ldots,v_k ) $ is $k''$.
\end{rem}
From the description of subspaces of $R^n$ in Theorem \ref{th3}, we deduce the following.

\begin{cor}
The subspaces of $R^n$ are all of the form $ S_1 \times S_2 \times \cdots \times S_n$ where $S_i= \{ 0 \}$ or $S_i=R$ for $i=1, \ldots,n$.
\label{th4}
\end{cor}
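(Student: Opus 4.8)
The plan is to read the corollary off Theorem~\ref{th3} after reducing an arbitrary subspace to the finitely generated case. First I would dispatch the easy inclusion: whenever $S_i\in\{\{0\},R\}$ for each $i$, the set $N=S_1\times\cdots\times S_n$ is a subspace of $R^n$. It is nonempty (it contains the zero vector) and it is a subgroup of $(R^n,+)$ as a product of subgroups, and since $(R^n,+)$ is abelian there is no normality to verify. The submodule condition $(m+w)r-mr\in N$ is checked coordinatewise: in a coordinate $i$ with $S_i=\{0\}$ the $i$-th component of $w$ is $0$, so the $i$-th component of $(m+w)r-mr$ is $m^{i}r-m^{i}r=0\in S_i$; in a coordinate with $S_i=R$ there is nothing to check.

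For the converse, let $N$ be an arbitrary subspace of $R^n$. The key step is the identity $N=\bigcup_{F}span(F)$, where $F$ runs over the finite subsets of $N$: each $span(F)$ is the smallest subspace of $R^n$ containing $F$, hence is contained in the subspace $N$, while conversely every $x\in N$ lies in $span(\{x\})$. By Theorem~\ref{th3}, for each finite $F$ we have $span(F)=\bigoplus_{i\in A_F}e_iR$ for some $A_F\subseteq\{1,\ldots,n\}$, where $e_i$ denotes the $i$-th standard basis vector of $R^n$; that is, $span(F)$ is the coordinate subspace of all vectors supported on $A_F$. Putting $A=\bigcup_F A_F$, I would then check $N=\bigoplus_{i\in A}e_iR$: the inclusion $\supseteq$ holds because each $e_i$ with $i\in A$ lies in some $span(F)\subseteq N$, and $\subseteq$ holds because every $span(F)$ is contained in $\bigoplus_{i\in A}e_iR$. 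This realises $N$ as $S_1\times\cdots\times S_n$ with $S_i=R$ for $i\in A$ and $S_i=\{0\}$ otherwise, which completes the proof.

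I expect the only genuinely delicate point to be the reduction to finitely generated subspaces, and specifically the identity $N=\bigcup_F span(F)$; once that is in place, Theorem~\ref{th3} does essentially all the work, since the family $\{span(F)\}$ is directed and there are only finitely many coordinate subspaces of $R^n$, so its union is again a coordinate subspace. One could instead bypass Theorem~\ref{th3} entirely: starting from any nonzero $v\in N$, normalise a nonzero coordinate to $1$ and repeatedly apply the ``adjustment trick'' from the proof of Theorem~\ref{th3} together with the submodule condition to extract $e_i\in N$ for every coordinate $i$ in which some element of $N$ is nonzero; but routing through Theorem~\ref{th3} keeps the argument short.
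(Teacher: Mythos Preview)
Your proof is correct and follows essentially the same strategy as the paper: both reduce an arbitrary subspace to the finitely generated case and then invoke Theorem~\ref{th3} to identify it with a coordinate subspace. The paper builds a single finite generating set greedily (choose $v_{k+1}\in S\setminus span(v_1,\ldots,v_k)$ until the process terminates, which it must since the spans are strictly increasing coordinate subspaces), while you instead take the union over all finite subsets and observe that a directed union of coordinate subspaces is again one; you also supply the easy converse direction, which the paper omits.
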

\begin{proof} 
Let $S$ be a subspace of $R^n.$ If $S= \{ 0 \}$ or $S=R^n$ then $S$ is the trivial subspace of $R^n.$ Let $v_1 \in S \setminus \{ 0 \},$ then $span (v_1) \subseteq S.$ Let $v_2 \in S \setminus span(v_1),$ then $span(v_1,v_2) \subseteq S.$ We continue this process until for $v_k \in S \setminus span(v_1,\ldots,v_{k-1} ) $ we have $span(v_1,\ldots,v_k )=S.$ Thus by Theorem \ref{th3}, $S$ is isomorphic (up to the reordering of coordinates) to $R^{k''}$ for $k'' \leq n.$ It is follows that $S$ is of the form
$ S_1 \times S_2 \times \cdots \times S_n$ where $S_i=R$ for $i=1,\ldots,k''$ and $S_i= \lbrace 0 \rbrace$ for $i=k''+1, \ldots,n.$  Hence each $S_i$ for $i=1,\ldots,n$ is subspace of $R$. But by Theorem \ref{irre} the only subspaces of $R$ is  $\lbrace 0 \rbrace$ and $R$. Hence $S_i= \lbrace 0 \rbrace $ or $S_i = R$, for $i =1, \ldots,n.$ 
\end{proof}

\begin{rem}
Let us consider $T= \lbrace (r,r, \ldots, r)  \in R^n | r \in R \rbrace$. $T$ is not of the form prescribed in  Corollary \ref{th4}. To see that $T$ is not subspace, let $ (r_1,0, \ldots, 0),(r,r, \ldots, r)   \in R^n$ and $ \alpha \notin  R_d$. Then 
\begin{align*} 
\left(   (r_1,0, \ldots, 0)^T  \ + (r,r, \ldots, r)^T  \ \right)   \alpha - \left( (r_1,0, \ldots, 0)^T \right) \alpha \\
  =  \big ( (r_1+r) \alpha +r_1 \alpha , r \alpha, \ldots ,  r \alpha  \big). 
\end{align*}
Since $ \alpha \notin R_d,$ there exists $ x, y \in R$ such that $(x+y) \alpha \neq x \alpha + y \alpha$. We choose $r_1=x$ and $r=y.$ So $(r_1+r) \alpha \neq r_1 \alpha  + r \alpha $ from which it follows that $ (r_1+r) \alpha -r_1 \alpha  \neq r \alpha$. Hence $T$ is not subspace of $R^n$.
\end{rem}

Let us illustrate Theorem  \ref{th3} with the following two examples. 
\begin{exa}
Let $R=DN(3,2)$ and  $v_1,v_2,v_3 \in R^5$ such that $v_1=(0,1,1,0,0), v_2=(0,x+1,2,0,x+1)$ and $v_3= (1,x+1,1,0,x).$ By Theorem \ref{th2},
\begin{align*}
gen(v_1,v_2,v_3) & = u_1R \oplus  u_2R \oplus  u_3R \oplus  u_4R,
\end{align*} where $u_1=(1,0,0,0,0), u_2=(0,1,0,0,0), u_3=(0,0,1,0,0)$ and $u_4=(0,0,0,0,1).$ By Theorem \ref{th3}, case 1, we have 
\begin{align*}
gen(v_1,v_2,v_3)& =span(v_1,v_2,v_3) \\
&= \bigoplus_{i=1}^4 e_iR  \cong R^4 \thickspace \mbox{where} \thickspace e_i=u_i , \thickspace \mbox{for} \thickspace i \in \{1, \ldots, 4 \},
\end{align*} 
is subspace of $R^5.$
\end{exa}
\begin{exa}
Let $R=DN(3,2)$ and  $v_1,v_2,v_3 \in R^5$ such that $v_1=(1,1,2,x+1,1), v_2=(0,0,0,2x+2,1)$ and $v_3= (1,1,1,x+2,1).$ By Theorem \ref{th2}
\begin{align*}
gen(v_1,v_2,v_3) & = u_1R \oplus  u_2R \oplus  u_3R \oplus  u_4R,
\end{align*} where $u_1=(1,1,0,0,0), u_2=(0,1,0,0,0), u_3=(0,0,1,0,0)$ and $u_4=(0,0,0,0,1).$ We wish to determine  $span(v_1,v_2,v_3).$ By Theorem \ref{th3}, case 2, we may apply the \say{\textsl{adjustment trick}} to reduce $u_1$. Define $m \in R^5$ such that 
\begin{align*}
m^j =
\begin{cases}
0 \thickspace \text{if $j \neq 1$}  \\
\alpha  \thickspace \text{if $j = 1.$}  
\end{cases}. 
\end{align*}

Let $ \alpha, \beta, \lambda \in R$ such that $(\alpha +\beta) \lambda \neq \alpha  \lambda  + \beta  \lambda .$
  Let $a= u_1 \big ( (u_1^1)^{-1} \big) \beta=u_1 \beta$  and $S=span (u_1,u_2,u_3,u_4).$  We have
\begin{align*}(m +a) \lambda -m \lambda & =
\left( (1,0,0,0,0) ^T  \alpha + (1,1,0,0,0) ^T \beta   \right) - \left( (1,0,0,0,0) ^T \right) \alpha   \\
& = \left(
   (\alpha + \beta) \lambda -\alpha \lambda ,
   \beta \lambda  , 0 ,0,0 \right)^ T \in S
\end{align*}
and
\begin{align*}
\left(
   (\alpha + \beta) \lambda -\alpha \lambda ,
   \beta \lambda  , 0 ,0,0 \right)^ T  - \left( \beta \lambda ,
   \beta \lambda  , 0 ,0,0 \right)^ T & = \left(
   (\alpha + \beta) \lambda -\alpha \lambda -\beta \lambda ,
   0 , 0 ,0,0 \right)^ T) \\
   & =  ( \gamma, 0,0,0,0) ^T \in S. 
\end{align*} where $ \gamma =  (\alpha + \beta) \lambda -\alpha \lambda -\beta \lambda$.
It follows that 
\begin{align*}
( \gamma, 0,0,0,0) ^T \gamma^{-1} = (1, 0,0,0,0) ^T = e_1 \in S. 
\end{align*}
Also
\begin{align*}
u_1-e_1=e_2 \in S.
\end{align*}
Therefore
\begin{align*}
span(v_1,v_2,v_3)& =span(e_1,e_2,u_1,u_2,u_3,u_4)  \\
& =span(e_1,e_2,u_2,u_3,u_4) \\
& = span(e_1,e_2,e_3,e_4,e_5) \\
&= \bigoplus_{i=1}^5 e_iR  \cong R^5,
\end{align*}
where $u_2=e_3, u_3=e_4$ and $u_4=e_5.$
\end{exa}
Surprisingly, unlike traditional vector spaces,  $span(v)$ for $v \in R^n$ can be the whole space.

\begin{pro} Let $v \in R^n$ and $k \leqslant n.$ Then 
$span(v)$ is $k$-dimensional if and only if $v$ contains $k$ non-zero entries.
\label{span}
\end{pro}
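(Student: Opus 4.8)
The plan is to reduce the proposition to a single identity for $span(v)$ and then read off the dimension. Write
\[
J=\{\,j\in\{1,\ldots,n\}\mid v^j\neq 0\,\},\qquad l=|J|,
\]
so that $v$ has exactly $l$ non-zero entries, and set
\[
N_J=\{\,x\in R^n\mid x^j=0\ \text{for all}\ j\notin J\,\}.
\]
I claim $span(v)=N_J$. Granting this, the proposition follows at once: the projection of $N_J$ onto its $J$-coordinates is an isomorphism of near-vector spaces $N_J\to R^{l}$, and exhibiting $N_J$ as a direct sum of vectors with a single non-zero entry shows, by the remark following Theorem \ref{th3}, that $\dim span(v)=l$. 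Hence $span(v)$ is $k$-dimensional if and only if $k=l$, that is, if and only if $v$ has $k$ non-zero entries.

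First I would prove $span(v)\subseteq N_J$. The set $N_J$ is a subgroup of $(R^n,+)$, and for $m\in R^n$, $x\in N_J$ and $r\in R$ the $j$-th coordinate of $(m+x)r-mr$ equals $(m^j+x^j)r-m^jr$, which vanishes for every $j\notin J$ since then $x^j=0$; thus $(m+x)r-mr\in N_J$, so $N_J$ is a subspace of $R^n$ (indeed one of the subspaces described in Corollary \ref{th4}). As $v\in N_J$ and $span(v)$ is by definition the smallest subspace of $R^n$ containing $v$, we conclude $span(v)\subseteq N_J$.

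For the reverse inclusion $N_J\subseteq span(v)$ I would specialise the construction in the proof of Theorem \ref{th3} to the single vector $v$. By Lemma \ref{lemm2}, $gen(v)=vR$, and the matrix produced from the single row $v$ by Theorem \ref{th2} is just $u_1=v\,(v^{j_1})^{-1}$, where $j_1=\min J$, whose set of non-zero positions is precisely $J$. If $l=1$ then $u_1=e_{j_1}\in span(v)$ and $N_J=e_{j_1}R\subseteq span(v)$. If $l\geq 2$, list the non-zero positions of $v$ as $j_1<\cdots<j_l$ and apply the \say{adjustment trick} of Cases 2 and 3 of the proof of Theorem \ref{th3} successively to the pairs of non-zero entries of $u_1$: this yields the standard vectors $e_{j_1},e_{j_2},\ldots,e_{j_l}$, each of which lies in $span(v)$, so $N_J=\bigoplus_{t=1}^{l}e_{j_t}R\subseteq span(v)$. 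Combining the two inclusions gives $span(v)=N_J$.

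The step I expect to need the most care is the reverse inclusion: one must verify that the adjustment-trick bookkeeping, applied to a row whose support is $J$, produces exactly the vectors $e_j$ with $j\in J$. That no index outside $J$ can occur is already guaranteed by the inclusion $span(v)\subseteq N_J$, and that every $j\in J$ does occur is precisely what the proof of Theorem \ref{th3} establishes; so the only genuinely new work is isolating the single-vector case and matching the pivot indices, which should be routine.
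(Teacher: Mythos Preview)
Your proof is correct and, at its core, follows the paper's approach: both arguments obtain the inclusion $N_J\subseteq span(v)$ (equivalently, the lower bound $\dim span(v)\geq l$) by invoking the adjustment trick from the proof of Theorem \ref{th3} to produce each $e_j$ with $j\in J$ inside $span(v)$. The one genuine difference is in the other direction. The paper argues that if $span(v)$ is $k$-dimensional then Theorem \ref{th3} furnishes a basis of standard vectors, and writing $v$ in that basis forces $v$ to have $k$ non-zero entries; you instead name the candidate subspace $N_J$ explicitly, verify directly from the submodule condition that it is a subspace, and conclude $span(v)\subseteq N_J$ by minimality. Your route for this half is more self-contained (it does not re-invoke Theorem \ref{th3}) and sidesteps the slight looseness in the paper's phrasing, where ``$v$ contains $k$ non-zero entries'' really only gives ``at most $k$'' until paired with the converse. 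Overall the two proofs are minor variations on the same idea.
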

\begin{proof}Let $V \in R^{1 \times n} $ be a matrix of size $1 \times n$ which contains the entries of $v$.
Suppose $span(v)$ is $k$-dimensional. Then  the row space of the matrix $V$  has  standard basis $\{ e_1,\ldots, e_k \}.$ So $span(v)=\bigoplus_{i=1}^{k}e_iR.$ Then $v=\sum_{i=1}^{k}e_ir_i$ where $r_i  \in R.$ Thus $v$ contains $k$ non-zero entries. 
Conversely suppose $v$ contains $k$ non-zero entries.  We now implement the \say{\textsl{adjustment trick} } (see case $3$, proof of Theorem \ref{th3}). Hence we eliminates occurrences of multiples of non-zero entries in $v$ while appending $k$ new vectors with each containing only one non-zero entry. Thus $span(v)=\bigoplus_{i=1}^{k}e_iR.$ Therefore $span(v)$ is $k$-dimensional.
\end{proof}
\begin{cor}
Let $v \in R^n.$ Then $span(v)=vR$ if and only if $v$ has at most one non-zero entry. 
\end{cor}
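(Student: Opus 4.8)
The plan is to deduce this corollary directly from Proposition~\ref{span}, which characterises the dimension of $span(v)$ in terms of the number of non-zero entries of $v$. First I would recall that for any $v \in R^n$, Lemma~\ref{lemm2} gives $gen(v) = vR$, and by part~(1) of the properties of \textit{gen} we always have $gen(v) = vR \subseteq span(v)$. So the content of the corollary is precisely the reverse inclusion $span(v) \subseteq vR$, equivalently $span(v) = vR$, and I want to show this holds exactly when $v$ has at most one non-zero entry.

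For the forward direction, suppose $span(v) = vR$. Since $vR = gen(v)$ is the smallest $R$-subgroup containing $v$, and $span(v) = vR$ being a submodule forces $vR$ to be closed under the extra submodule condition, the key point is a dimension count: $span(v) = vR$ means $span(v)$ is a homomorphic image of $R_R$, hence at most $1$-dimensional. If $v = 0$ then $span(v) = \{0\}$ is $0$-dimensional; if $v \neq 0$ then $span(v)$ is $1$-dimensional. By Proposition~\ref{span}, $span(v)$ is $k$-dimensional iff $v$ has exactly $k$ non-zero entries, so $k \leq 1$ forces $v$ to have at most one non-zero entry. Conversely, if $v$ has at most one non-zero entry, then either $v = 0$, in which case $span(v) = \{0\} = vR$, or $v$ has exactly one non-zero entry, say in position $i$; then by Proposition~\ref{span} $span(v)$ is $1$-dimensional, and in fact $span(v) = e_i R = vR$ (scaling $v$ by the inverse of its non-zero entry gives $e_i$, and $e_i$ scaled back gives $v$), so $span(v) = vR$.

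The only mild obstacle is making the dimension bookkeeping precise: I need to note that a $1$-dimensional $R$-subgroup of the form $vR$ with $v$ having a single non-zero entry is automatically a submodule (it is $e_i R$ up to a scalar, and $e_i R = \{0\}\times\cdots\times R\times\cdots\times\{0\}$ is visibly a submodule by Corollary~\ref{th4}'s description, or directly by checking $(m+n)r - mr$ lands in it since all relevant computations occur in a single coordinate where $R$ is a nearfield), so that $span(v)$ does not strictly exceed $vR$. With that observation in hand, the equivalence is immediate. The write-up should be just a few lines invoking Proposition~\ref{span} and Lemma~\ref{lemm2}.

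\begin{proof}
By Lemma~\ref{lemm2}, $gen(v) = vR$, and since every submodule is an $R$-subgroup we have $vR = gen(v) \subseteq span(v)$. Hence $span(v) = vR$ if and only if $span(v) \subseteq vR$.

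Suppose first that $v$ has at most one non-zero entry. If $v = 0$ then $span(v) = \{0\} = vR$. If $v$ has exactly one non-zero entry, in position $i$, say $v = e_i c$ with $0 \neq c \in R$, then $e_i = v c^{-1} \in vR$ and $vR = e_i R$, which by Corollary~\ref{th4} (or directly, since all computations in the defining condition of a submodule take place in the $i$-th coordinate, where $R$ is a nearfield) is a submodule of $R^n$ containing $v$. Therefore $span(v) \subseteq e_i R = vR$, so $span(v) = vR$.

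Conversely, suppose $span(v) = vR$. Then $span(v)$ is a homomorphic image of $R_R$ under the map $r \mapsto vr$, so $\dim span(v) \leq 1$. By Proposition~\ref{span}, $\dim span(v)$ equals the number of non-zero entries of $v$; hence $v$ has at most one non-zero entry.
\end{proof}
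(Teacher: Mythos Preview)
Your proof is correct and follows essentially the same approach as the paper's: both directions rest on Proposition~\ref{span} and Lemma~\ref{lemm2}. Your write-up is in fact more careful than the paper's --- you explicitly handle the case $v=0$, justify the bound $\dim span(v)\le 1$ via the surjection $R_R\to vR$, and explain why $e_iR$ is already a submodule --- whereas the paper simply asserts $span(v)=gen(v)=vR$ in the converse direction without spelling out that $vR$ is a submodule.
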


\begin{proof}
If  $span(v)=vR$ then $span(v)$ is one dimensional and by Proposition \ref{span} $v$ must contain only one non-zero entry.   Suppose $v$ has at most one non-zero entry. By Lemma  \ref{lemm2} $gen(v)=vR.$ Thus $span(v)=gen(v)=vR.$
\end{proof}

In \cite{goldman1970foundations,knuth1971subspaces} the authors derived an expression that evaluates the number of $k$-dimensional subspaces of the finite dimensional vector space $(F^n,F)$ where $F$ is a finite field. In the case of the finite dimensional Beidleman near-vector space $(R^n,R)$, where $R$ is nearfield we have that
\begin{pro}
The number of subspaces of dimension $k$ of $R^n$ is $\Vector{n}{k}.$
\end{pro}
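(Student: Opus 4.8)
The plan is to read the count straight off the classification of subspaces in Corollary~\ref{th4}. By that corollary every subspace $S$ of $R^n$ has the form $S = S_1 \times S_2 \times \cdots \times S_n$ with each $S_i$ equal to $\{0\}$ or to $R$. Hence $S$ is completely determined by the set of "active" coordinates
\begin{align*}
A(S) := \{\, i \in \{1,\ldots,n\} \ :\ S_i = R \,\},
\end{align*}
and conversely every subset $A \subseteq \{1,\ldots,n\}$ is realised by the subspace $\bigoplus_{i \in A} e_i R$ (which really is a subspace, being a submodule of $R^n$). So the first step is to make this correspondence $S \mapsto A(S)$ precise and observe that it is surjective onto the power set of $\{1,\ldots,n\}$ by the displayed construction.

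The second step is injectivity. If $A(S) \neq A(S')$, pick $i \in A(S) \setminus A(S')$ (without loss of generality); then the standard vector $e_i$ lies in $S$, whereas every vector of $S'$ has zero $i$-th coordinate, so $e_i \notin S'$ and therefore $S \neq S'$. Thus $S \mapsto A(S)$ is a bijection between the subspaces of $R^n$ and the subsets of $\{1,\ldots,n\}$.

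The third step is to identify the dimension of the subspace attached to a set $A = \{i_1 < \cdots < i_m\}$. Here $S = \bigoplus_{j=1}^{m} e_{i_j} R$, and each $e_{i_j} R \cong R_R$ is irreducible, so this is a direct sum of $m$ irreducible submodules and $\{e_{i_1},\ldots,e_{i_m}\}$ is a basis; hence $\dim S = m = |A|$ (this is already implicit in the proof of Corollary~\ref{th4}, where $S \cong R^{k''}$ with $k'' = |A|$). Consequently the bijection restricts to a bijection between the $k$-dimensional subspaces of $R^n$ and the $k$-element subsets of $\{1,\ldots,n\}$, of which there are $\Vector{n}{k}$.

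The only delicate point — and it is really bookkeeping rather than a genuine obstacle — is keeping the injectivity and the dimension count aligned; all of the structural work has been carried out in Corollary~\ref{th4}, so once the coordinate-set correspondence is set up the statement reduces to a plain counting argument.
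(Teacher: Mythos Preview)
Your proof is correct and follows essentially the same approach as the paper: both read the count directly off Corollary~\ref{th4}, identifying $k$-dimensional subspaces with $k$-element subsets of the coordinate set $\{1,\ldots,n\}$. Your version is more careful in spelling out the bijection and verifying its injectivity and the dimension identification, whereas the paper's proof is a terse two-sentence sketch, but the underlying argument is the same.
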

\begin{proof}
By Corollary \ref{th4} a $k$-dimensional subspace  is isomorphic to $R^k$ and $n-k$ is the number of zeros appearing in the $n$ coordinates. Now the number of subspaces of dimension $k$  corresponds to choosing  $k$ out of the $n$ coordinates, hence it is $\Vector{n}{k}.$
\end{proof}

\begin{cor}
There are  $2^n$  subspaces of $R^n$.
\end{cor}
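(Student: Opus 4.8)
The plan is to count the subspaces of $R^n$ by organising them according to dimension and then summing. The key input is the classification already obtained: by Corollary \ref{th4} every subspace of $R^n$ has the form $S_1 \times S_2 \times \cdots \times S_n$ with each $S_i$ equal to $\{0\}$ or to $R$, and conversely every such product is a subspace (each factor is a subspace of $R$ by Theorem \ref{irre}, and a finite direct product of subspaces is a subspace by the lemmas on sums of subspaces in Section 4). Hence the map sending a subspace to the tuple $(S_1,\ldots,S_n) \in \{\{0\},R\}^n$ is a bijection between the set of subspaces and a set of cardinality $2^n$, which already gives the result directly.

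Alternatively, and this is the route I would actually write out since it reuses the immediately preceding proposition, I would invoke the count of $k$-dimensional subspaces. By that proposition the number of subspaces of $R^n$ of dimension $k$ equals $\binom{n}{k}$ for each $k \in \{0,1,\ldots,n\}$. Since every subspace has a well-defined dimension (finite, bounded by $n$, by Theorem \ref{th3} and the remark following it), the subspaces are partitioned into these $n+1$ disjoint classes, so the total number is
\begin{align*}
\sum_{k=0}^{n} \binom{n}{k} = 2^n
\end{align*}
by the binomial theorem applied to $(1+1)^n$.

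Both arguments are essentially immediate consequences of results proved earlier in the section, so there is no genuine obstacle here; the only point requiring a word of care is making sure the two descriptions of a subspace agree, namely that the dimension $k''$ produced by Theorem \ref{th3} really is the number of coordinates $i$ with $S_i = R$ in the product decomposition of Corollary \ref{th4} — but this is exactly what the proof of Corollary \ref{th4} establishes. I would therefore present the binomial-sum version as the main line and remark that it also follows at once from the product form in Corollary \ref{th4}.
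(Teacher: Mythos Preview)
Your proposal is correct and matches the paper's approach: the paper states this corollary without proof, treating it as an immediate consequence of the preceding proposition that the number of $k$-dimensional subspaces is $\binom{n}{k}$, which is exactly your binomial-sum argument. Your alternative bijection with $\{\{0\},R\}^n$ via Corollary~\ref{th4} is also valid and equally natural here.
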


 \section{Conclusion and open problem}

In future work, we suggest investigating the following question. Given a matrix $M$ in extended reduced row echelon form with non-zero rows, can we determine the  minimal sets of non-zero row vectors that form a matrix $N$ such that 
\textit{gen}( rows of M)=  \textit{gen}( rows of $N)$?   It might be also fruitful to investigate on the subspaces of $R^n$ in geometrical  view.

\textbf{Acknowledgments:}
Both authors are grateful  to Georg Anegg for his  helpful inputs  and  Dr Gareth Boxall for his advise. The first author worked on this paper while studying toward his PhD at Stellenbosch University. 

\textbf{Funding:} Both authors are grateful for funding by NRF, AIMS, Stellenbosch University and DAAD.
\section{Appendix}

\begin{algo} The expanded Gaussian  elimination (eGe) computes the smallest $R$-subgroup containing a given set of vectors in $R^n.$ The algorithm implements the normal Gaussian  elimination plus the distributivity trick.
\end{algo}
\begin{itemize}
\item[ \textbf{Input:} ] $v_1,v_2, \ldots,v_k \in R^n$ for $R =DN(q,m)$ where $m>1$ arranged in a matrix $V=(v_i^j)_{1 \leq i \leq k,1 \leq j \leq n} \in R^{k \times n}.$
\begin{itemize}

\item[ \textbf{Step 1}]$W= RREF(V)$ (reduced row echelon form of $V$ by Gaussian  elimination operations).

\item[ \textbf{Step 2}] ~ 

\begin{itemize}
\item[Case 1]Suppose that every column of $W$ has at most one non-zero entry. Then $gen(v_1,\ldots,v_k)= \bigoplus_{i=1}^{k}w_iR$.
\item[Case 2]Suppose that $j$-th column is  the first and the only column of  $W$ that has at least two non-zero entries denoted by $w_r^j,w_s^j,w_t^j$, \ldots where $r<s<t< \cdots$
\begin{itemize}
\item[Subcase 1] Consider the first two non-zero entries say $w_r^j\neq 0 \neq w_s^j$ with $r<s$.  \\ Apply the first \say{\textsl{distributivity trick}} by creating the new row $\phi.$
\item[Subcase 2] Consider the second two non-zero entries say $\phi^j \neq 0 \neq w_t^j.$\\
Apply the  second \say{\textsl{distributivity trick} }.
\end{itemize}
 \quad \vdots \\
  Continuing this manner until in the $j$-th column we have only one non-zero entry.
  \item[Case 3] Suppose  that there are more than one non-zero entries at each  $j$-th, $(j+1)$-th, $(j+2)$-th, \ldots, $n$-th columns of $W$.  Then apply the \say{\textsl{distributivity trick} } on every of the $j$-th, $(j+1)$-th, $(j+2)$-th, \ldots, $n$-th column until we have only one non-zero entry in every column.
\end{itemize}
\end{itemize}
\item[ \textbf{Output:} ] The final matrix  $U=(u_i^j)_{1 \leq i \leq k',1 \leq j \leq n} \in R^{k' \times n}$ which has at most one non-zero entry in every column. Let $u_1, \ldots, u_{k'}$ be the rows of $U$. We have, $gen(v_1,\ldots,v_k)= \bigoplus_{i=1}^{k'}u_iR$.

\end{itemize}

\begin{algo} The adjustment expanded Gaussian  elimination (aeGe) computes the smallest subspace containing a  given set of vectors in $R^n.$ The algorithm implement
the expanded Gaussian  elimination plus the adjustment trick.

\end{algo}
\begin{itemize}
\item[ \textbf{Input:} ] $v_1,v_2, \ldots,v_k \in R^n$ for $R =DN(q,m)$ where $m>1$ arranged in a matrix $V=(v_i^j)_{1 \leq i \leq k,1 \leq j \leq n} \in R^{k \times n}.$
\begin{itemize}

\item[ \textbf{Step 1}]  Apply eGe and we get $gen(v_1,\ldots,v_k)=\bigoplus_{i=1}^{k'}u_iR.$
\item[ \textbf{Step 2}] ~ 

\begin{itemize}
\item[Case 1] Suppose that $u_i$ for $i=1,\ldots,k'$ have exactly one non-zero entry. Then $span(v_1,\ldots,v_k)=\bigoplus_{i=1}^{k'}u_iR.$
\item[Case 2]Suppose that $u_i$ is the only $i$-th row that has more than one non-zero entries denoted $u_i^{j_1},u_i^{j_2}, u_i^{j_3}, \ldots$. Apply the \say{\textsl{adjustment trick} } on the row $u_i$ until we eliminate occurrences of multiple non-zero
entries while appending new vectors with only one non-zero entry.
\item[Case 3] Suppose that the rows $u_i, u_{i+1}, \ldots,u_{k'}$ have each more than one non-zero entry. Apply on each row the \say{\textsl{adjustment trick} }.
\end{itemize}
\end{itemize}
\item[ \textbf{Output:} ] The final matrix  $E=(e_i^j)_{1 \leq i \leq k'',1 \leq j \leq n} \in R^{k'' \times n}$ which has at most one non-zero entry in every column. Let $e_1, \ldots, e_{k'}$ be the rows of $E$. We have, $gen(v_1,\ldots,v_k)= \bigoplus_{i=1}^{k''}e_iR$.

\end{itemize}

%
%
%
%
%
%
%
%
%

%

\end{document}